 \newcommand{\cQ}{\mathcal{Q}}
\newcommand{\cA}{\mathcal{A}}
\newcommand{\cD}{\mathcal{D}}
\newcommand{\cI}{\mathcal{I}}
\newcommand{\cV}{\mathcal{V}}
\newcommand{\cX}{\mathcal{X}}
\newcommand{\fA}{\mathfrak{A}}
\newcommand{\fM}{\mathfrak{M}}
\newcommand{\A}{\mathbb{A}}
\newcommand{\N}{\mathbb{N}}
\newcommand{\R}{\mathbb{R}}
\newcommand{\Q}{\mathbb{Q}}
\newcommand{\Z}{\mathbb{Z}}
\newcommand{\Cc}{\mathbb{C}}
\newcommand{\Pp}{\mathbb{P}}
\newcommand{\bA}{{\mathbf A}}
\newcommand{\bK}{{\mathbf K}}
\def\be{\mathbf{e}}
\def\bp{\mathbf{p}}
\def\bq{\mathbf{q}}
\def\bu{\mathbf{u}}
\def\bv{\mathbf{v}}
\newcommand{\rA}{{\mathrm A}}
\newcommand{\rC}{{\mathrm C}}
\newcommand{\rK}{{\mathrm K}}
\newcommand{\rR}{{\mathrm R}}
\newtheorem{example}{Example}
\newtheorem{thm}{Theorem}
\newtheorem{lem}{Lemma}
\newtheorem{cor}{Corollary}
\newtheorem{prop}{Proposition}
\theoremstyle{definition}
\newtheorem{dfn}{Definition}
\theoremstyle{remark}
\newtheorem{rem}{Remark}
\newtheorem{exa}{Example}
\theoremstyle{remark}
\title[Exponential varieties, statistical manifolds and Frobenius structures]{A note on exponential varieties, statistical manifolds and Frobenius structures}
\author{No\'emie C. Combe } 
\address{Max Planck Institute for Mathematics in the Sciences,
Inselstr. 22,04103, Leipzig}
\email{noemie.combe@mis.mpg.de}
\thanks{I acknowledge the Minerva Grant from the Max Planck Society for supporting my work.}
\keywords{Exponential varieties, (pre)-Frobenius manifold, webs, toric variety}
\subjclass{Primary: 53D45, 14M25, 62B11; Secondary: 20C20, 14C21}
\date{\today}
\begin{document}

\maketitle

\begin{abstract}
New relations between algebraic geometry, information theory and Topological Field Theory are developed.
One considers models of databases subject to noise i.e. probability distributions on finite sets, related to exponential families. We prove explicitly that these manifolds have the structure of a pre-Frobenius manifold, being a pre-structure appearing in the process of axiomatisation of Topological Field Theory. On one hand, this allows us to develop relations to algebraic geometry, by proving explicitly that a statistical pre-Frobenius manifold forms an algebraic variety over $\Q$ (i.e. $\Q$-toric variety). On the other hand, this allows further developments of recent results concerning the hidden symmetries of those objects. Using classical web theory, it has been shown that those symmetries have the structure of Commutative Moufang Loops. Our result allows to develop more algebraically this statement, in a two-fold way. First, from an algebraic point of view it follows that statistical pre-Frobenius manifolds are equipped with algebraizable webs. Secondly, from the differential geometry point of view, it follows that these webs are hexagonal and isoclinic. This statement is important since it directly impacts the geometric properties of the {\it statistical data}, which are tightly related to the webs. Hence, this allows deeper connections to the branch of algebraic statistics, which is concerned with the development of techniques in algebraic geometry, commutative algebra, to address problems in statistics and its applications. Examples are provided and discussed. 
\end{abstract} 
\tableofcontents
\section{Introduction}
The manifold of probability distributions, related to exponential families is considered. It forms a pre-Frobenius manifold and whenever its structure connection $\nabla_{\lambda}$ is flat (for $\lambda$ an even parameter) it has a Frobenius manifold structure. We call this manifold for simplicity {\it statistical (pre-)Frobenius manifold}. Frobenius manifolds play a key role in the axiomatisation of Topological Field theory~\cite{Du,Lu}

In addition to giving a geometric proof of the existence of a pre-Frobenius structure on this type of manifold, we show a deep relation between statistical pre-Frobenius manifolds~\cite{CoMa20} and algebraic varieties over $\Q$. In particular, when the sample space is discrete, then the statistical pre-Frobenius manifold is a rational toric variety~\cite{PRW01,MaPi10,PiRo11,MSUZ14}, which forms a type of algebraic variety~\cite{Sha72, Ha77, Da78}. 

Toric varieties have been an effervescent research topic, and among the most important of algebraic geometry for many years \cite{Br04,CLS11,Da78, Fu, She22, Stu}. Toric varieties continue to be at the heart of a wide range of interests and conjectures, including in the mathematical vision of the mirror problem. Furthermore, one important conjecture was given by Manin~\cite{LoMa00}. This  concerns a conjectural distribution of rational points on an algebraic variety (Fano variety) relative to a suitable height function. This has been proved as being true for some cases such as toric varieties~\cite{BaTsch}. Recently, a result by the author~\cite{Co} shows that this conjecture can be extended to certain objects of information transmission and that the Manin conjecture is also true in that case.  

Our approach in this paper develops furthermore the dialogue between algebra and structures related to transmitting information and data collection. This benefits to both disciplines. On the one hand, algebra provides a powerful tool set for addressing statistical problems. On the other hand, it is rarely the case that algebraic techniques are ready-made to address statistical challenges, and usually new algebraic results need to be developed~\cite{Bernd}. This is what precisely motivates our investigations in this paper. 

Our results in this paper present a ``Part 3'' of recent developments in~\cite{CoMaMaA, CoMaMaB}. In these papers it has been shown that symmetries of various structures related to transmitting information (and data collection) are embodied in various classes of algebraic structures.  Such algebraic structures include loops and Moufang loops. In~\cite{CoMaMaA} an explicit proof is given of the fact that symmetries of probability distributions over a finite set have a Moufang loop structure. Our aim is to show the existence of an algebraic aspect of these works, giving possible applications in the realm of algebraic statistics. 

The underlying tool to see these algebraic structures appearing is to use web theory. This is turned out historically and recently to be a successful tool for unraveling analytical/ algebraic (local) invariants related to a given manifold (or the algebraic variety under consideration). For instance, a very recent result on webs in relation to another class of Frobenius manifolds has allowed to prove the existence of hidden symmetries of the Grothendieck--Teichm\"uller group~\cite{CoKa} and of the motivic Galois group. 

Going back to hidden symmetries of probability distributions over a finite set, this result has lead to the proof that the F-manifold structure on this space can be described in terms of differential three-webs and Malcev algebras. We develop this aspect furthermore, by proving the additional property: statistical pre-Frobenius manifolds are equipped with algebraizable webs. 

This statement is important because it impacts the geometric properties of the {\it statistical data}. Furthermore, it allows deeper connections to the branch of algebraic statistics, which is concerned with the development of techniques in algebraic geometry, commutative algebra, and combinatorics, to address problems in statistics and its applications. 

Furthermore, since webs have also differential geometry origins this allows a supplementary connection to differential geometry. From the more projective differential geometry side, one has a local invariant property that those webs are hexagonal and isoclinic. 

Finally, we can refer to one more aspect of algebraic webs as follows. Algebraic webs are closely related to algebraic loops and to quasigroups (these satisfy the group axioms concerning the identity and the inverse but the associativity condition is not required). 
The left loops (loops with a left type of multiplication) are in bijection with left homogeneous spaces (which are equipped also with a left type of multiplication). Therefore, one can say that left loops and  left homogeneous spaces describe the same structure on a given affine connection manifold (see \cite{No} for a definition of affine connection manifolds). 

The space probability distributions on a finite set have precisely the affine connection manifolds~\cite{CoMa20} structure.
A property of these types of manifolds is that one can introduce a specific type of local loop in the neighborhood of any point (see \cite{Kika} for details on the properties of those loops). These loops are uniquely determined by means of the parallel transport along a geodesic~ \cite{Sa1,Sa2}. Now, this family of local loops defines a covering of the manifold and uniquely determine all affine connections. So, applying algebraic webs here provides us with important information concerning the geometrical structure of the manifold under investigation.

\subsection{Webs and their geometry}
Three-webs were initially introduced by E. Cartan~\cite{Ca08,Ca28,Ca37,Ca60,Ca62,CaH67} as a movable frame method, and played a crucial role in differential-geometric studies of the 20th century. This theory also was investigated by W. Blaschke ~\cite{Bl28, Bl33,Bl38, Bl55}.

The French and German names for the concept are respectively ``tissu'' and ``Gewebe'', to recall the fabrication et optimization for cut of fabric~\cite{Tch78}. These words with these meanings convey a good intuitive picture of the kind of geometric structure involved. An easy example of a 3-web of curves in the plane is given by the three systems of lines $x = const$, $y = const$, and $x+y = const$. More generally, the last can be replaced by any smooth function $f$, verifying $f(x,y)=const$.

\begin{figure}[h] \label{F:hex}
   \begin{center}
    \includegraphics[width=0.4\textwidth]{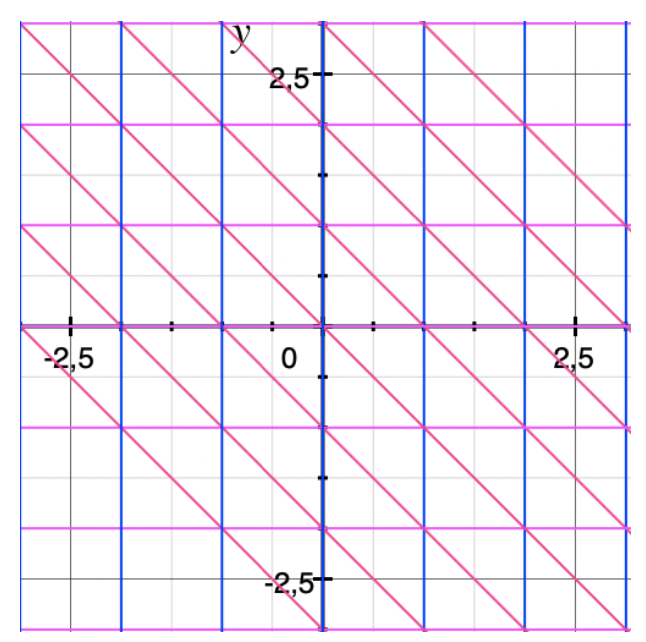}
 \caption{Three web given by $x = const,\, y = const$, and $x+y = const$ on a 2-dimensional domain $\cD$}
\end{center}
\end{figure}

Many of the papers of W. Blaschke concerned Klein's famous ``Erlangen program'' (1872). According to this program, the philosophy was that every geometry studies properties of geometric figures that are invariant under transformations composing a certain group. This philosophy was extended to web geometry. Blaschke and Thomsen (see~\cite{Bl28} and~\cite{Th27}) started to study 3-webs after they realized that the configuration of three foliations of curves in the plane has local invariants.

Later on, S. S Chern~\cite{Cher36} in (1936) introduced a reduction of the linear coframe bundle over the so-called web manifold, corresponding to the 3-web structure. This version of the linear coframe bundle was equipped with what we now call the Chern connection. This connection is invariant with respect to local differentiable equivalences of 3-web.  

Further developments of this approach were carried out by M. A. Akivis (1969) and his school. Most of the results are summarised in~\cite{AkSh92}. Some results were generalised in~\cite{Go88} to $(n+1)$-webs (where one has $n+1$ foliations instead of three). 

In particular, in the Akivis school they used this approach of adapted coframe bundle to prove the existence of {\it invariants of 3-webs}, and clarified the relation between those 3-webs and its corresponding algebraic isotopy invariant identities of the coordinate loop. Namely, in~\cite{AkSh92} the relation between web geometry with the theory of (smooth local differentiable) quasigroups and loops as well as with the theory of binary-ternary algebras was explicitly shown.

Note also that web geometry was instrumental in the solution of the algebraization problem for (a number of) $d$ submanifolds in a real projective space. This problem was solved for instance in~\cite{Ak83,Go82,Woo82,Woo84}. 

To give an intuition of the idea behind three webs, let us consider a 2-dimensional manifold $\cD$. Take three regular families of smooth curves in $\cD$, that are in general position. We say that they form a 3-web in $\cD$. We say that two 3-webs are equivalent one to another if there exists a local diffeomorphism which maps the families of one web into the families of another one. In a more sophisticated manner, we can say that three webs are three foliations of a manifold, where leaves are in general position.

\smallskip 

Section~2: presents the definition of the manifold of probability distributions, related to exponential families. We prove that it is a (pre-)Frobenius manifold. Moreover, if it satisfies a flatness condition on the structure connection then it is Frobenius. Such a statistical manifold satisfying the latter condition goes by the name of {\it statistical Frobenius manifold} or fourth Frobenius manifold (referring to the fact that {\it only three} other sources of Frobenius manifolds are known). 

\smallskip 

Section~3: After a short introduction to the notion of toric varieties, we proceed to the proof of the main statement: the pre-Frobenius statistical manifold is a toric variety, if the sample space is discrete. 

\smallskip 

Section~4: notions coming from web theory are recalled. In particular, the previous theorem on pre-Frobenius statistical manifolds implies that webs, given by the Chern-Griffiths construction, are algebraizable and satisfy the hexagonality property. Consequences of our theorem concerning statistical data and in particular their geometric properties are given.

\medskip

\section{(pre-)Frobenius statistical manifolds}~\label{S:2}
We prove explicitly that there exists a pre-Frobenius manifolds structure \cite{Man99,Du,Lu} appearing in exponential statistical manifolds. For the sake of clarity, we recall the classical definition on these objects (this is exposed in section 2.1.). However, a different definition (more geometrical) is necessary to see the structure of (pre-)Frobenius manifolds emerging. This resides in the approach given by H. Shima \cite{Shi77} and is developed in section 2.2.

\subsection{Exponential statistical models/ manifolds}~\label{S:2.1}
A statistical model (or manifold) can be considered in two equivalent ways. A first way is to consider it as a parametrized family of probabilities, denoted $P_\theta$ and being absolutely continuous with respect to a $\sigma$-finite measure $\mu$. The second way is to consider it as the parametrized family of probability distributions $S=\{p(x;\theta)\}$, where $p(x;\theta) =\frac{dP_\theta}{d\mu}$ is the Radon--Nikodym derivative of $P_\theta$ w.r.t. $\mu$. This space comes equipped with:
\begin{itemize}
\item the so-called {\it canonical parameter} given by $\theta= (\theta^1,\dots, \theta^n)\in \R^n$;
\item a family of random variables $x=\{x_i\}$  on a sample space $\Omega$;  
\item $p(x;\theta)$, the density of probability of the random variable $x$ parametrized by $\theta$ w.r.t some measure $\mu$ on $\Omega$.   
\end{itemize}
  
A family $S=\{p(x;\theta)\}$ of distributions is an {\it exponential family} if the density functions can be written in the following way:   
  \[p(x;\theta)= \exp(\theta^ix_i-\Psi(\theta)),\] where   
  \begin{itemize}
  \item the symbol $\Psi(\theta)$ stands for a potential function. It is given by $\Psi(\theta)=\log\int_{\Omega}\exp\{\theta^ix_i\}d\mu$. 
  \item the parameter $\theta$ and  $x=(x_i)_{i\in I}$ ($I$ is a finite set) have been chosen adequately;
\item the  canonical parameter satisfies $\Theta:=\{\theta\in \R^d: \Psi(\theta)<\infty\}$.
\end{itemize}
It is natural to include in the statistical model a structure of $n$-dimensional manifold, whenever $p(x;\theta)$ is smooth enough in $\theta$.

\smallskip 

We adopt now a more geometrical definition of the exponential statistical manifolds. This allows us to prove explicitly the existence of a (pre-)Frobenius manifold structure on this type of manifold. 

\smallskip 

A statistical structure on a differentiable manifold $M$ is a pair $(g, \rC)$, where $g$ is a metric tensor and $\rC$ a totally symmetric cubic (0,3)-tensor (i.e., 3-covariant tensor) called the Amari-Chentsov tensor~\cite{Am85,Chen64,Chen82,La87}. Such a manifold $M$ can be equipped with the Levi--Civita connection $\nabla^0$ for $g$, which is the unique torsion free affine connection compatible with the metric $g$, satisfying the following equation, for any triple of vector fields $(X, Y, Z)$ :
\[ X(g(Y, Z)) = g(\nabla^0_{_X}Y, Z) + g(Y, \nabla^0_{_X} Z), \]
or equivalently the Koszul formula:
\[\begin{aligned}
2g(\nabla^0_{_X}Y,Z)&=X(g(Y,Z))+Y(g(X,Z))-Z(g(X,Y))\\
&+g([X,Y],Z)-g([X,Z],Y)-g([Y,Z],X)\end{aligned}.\]

\smallskip 

 Given the Levi--Civita connection $\nabla^0$ for $g$ on $M$,
we have a pencil $\{\nabla^\alpha\}$ of $\alpha$-connections depending on a parameter $\alpha$ defined as: 
\begin{equation}\label{E:1} 
g(\nabla^{\alpha}_{_X}(Y),Z) :=g(\nabla^{0}_{_X}(Y),Z) -\frac{\alpha}{2}\rC(X,Y,Z),
\end{equation}
 where $X,Y,Z$ are vector fields in the tangent sheaf $T_M$. 
 
 These  $\alpha$-connections being affine and torsion free  allow to derive the notion of covariant derivative, parallel transport  and geodesics. 
  Moreover any pair  $(\nabla^\alpha,\nabla^{-\alpha})$ defines a pair of conjugate connections  $(\nabla,\nabla^\star)$ with respect to the metric $g$, that is
  \begin{equation}\label{E:2} 
X(g(Y, Z)) = g(\nabla_{_X}Y, Z) + g(Y, \nabla^\star_{_X} Z),
\end{equation}
 where $ \nabla=\nabla^\alpha$ and $\nabla^\star=\nabla^{-\alpha}$.
 
 By \eqref{E:1} the Amari--Chentsov tensor can be expressed in terms of a (1,2)-tensor difference tensor $\rK$, as:
 \begin{equation}\label{E:3} 
  \rC(X,Y,Z)= g(K(X,Y),Z), \quad \rK(X,Y)=\frac{2}{\alpha}(\nabla^{-\alpha}_{_X}-\nabla^{\alpha}_{_X})Y
\end{equation}
 
Now, the curvature tensor $\rR^\alpha$ of an  $\alpha$-connection $\nabla^\alpha$ is a (1, 3)-tensor given by:
\begin{equation}\label{E:4}
g(\rR^\alpha(X, Y )Z ,W)= g([\nabla^\alpha_{_X},\nabla^\alpha_{_Y} ]Z,W) - g(\nabla^\alpha_{_{[X,Y ]}}Z,W),
\end{equation}
  where $X,Y,Z,W$ are vector fields on $M$.
  
In the pencil we have $\rR^\alpha=\rR^{-\alpha}$, hence if an $\alpha$-connection is flat (or locally flat) then, the $-\alpha$-connection is flat as well. For example in the important case of  exponential families the $\alpha$-connections are flat if $\alpha= \pm 1$.

This construction provides the possibility of a generalization. In particular, from this construction emerges directly the (pre-)Frobenius manifold structure .

\subsection{Exponential statistical Pre-Frobenius and Frobenius manifolds}\label{S:2.2} 
Let $M$ be a manifold in one of the standard categories ($C^\infty$, analytic, algebraic, formal, $\cdots$). 
We recall the classical definition of a pre-Frobenius manifold and focus on the $\mathbb{R}$ case. 

A pre-Frobenius structure~\cite{Man99} on a manifold $M$ follows from the existence of the following data:
  
 -- an affine flat connection (in this context this is to be interpreted as having a complete atlas whose transition functions are affine linear),

-- a compatible metric $g$,

-- an even symmetric rank three tensor $\rA: T_M\times T_M\times T_M\to \mathbb{R}$,

-- a symmetric bilinear multiplication operation 
 $\circ_{\rA,g}: T_M \times T_M \to T_M$. Furthermore, the metric is invariant with respect to multiplication:
\begin{equation}\label{E:5} 
 \rA(X,Y,Z)=g(X,Y\circ Z)=g(X\circ Y, Z), \quad  X,Y,Z \in T_M. 
\end{equation}

A Frobenius structure is a pre-Frobenius manifold such that associativity and potentiality axioms are satisfied. 

-- The associativity condition is satisfied if the multiplication is associative i.e. 
\begin{equation}\label{E:6} 
(X\circ Y)\circ Z=X \circ (Y \circ Z),
\end{equation}
 
 -- The potentiality condition is satisfied if everywhere locally $\rA$ admits a potential, i.e. a local even function $\phi$ such that for any flat local tangent fields $\partial_a,\partial_b,\partial_c$ we have
 \[
 \rA(\partial_a,\partial_b,\partial_c)=\partial_a\partial_b\partial_c\phi.
\]
Given the Levi--Civita connection $\nabla_0$ for $g$ on $M$, we have $\nabla_\lambda$, a pencil of connections depending on an even parameter $\lambda$. This pencil of connections is defined as: 
\begin{equation}\label{E:7} 
\nabla_{\lambda,X}(Y) :=\nabla_{0,X}(Y) +\lambda  X\circ Y,
\end{equation}

Going back to the statistical structure, the symmetric difference tensor $K$ allows the definition of the symmetric multiplication operation $\circ_{\cA,g}$ on the tangent sheaf $T_M$. Indeed, given that $\partial_a \circ \partial_b =\sum_c \rA_{ab}^c\partial_c$ we obtain from \eqref{E:3}: 
\[\circ_{\rA,g}: T_M \times T_M \to T_M\]
\[(\partial_a,\partial_b)\mapsto \sum_c \rK_{ab}^c\partial_c.\] 
 The tensor $\rA$ is a symmetric rank three tensor, which can be identified to $-2\rC$ in the context of information geometry. 
We can thus say that:

\begin{lem}\label{L:1}
 A statistical structure  $(M,g,\{\nabla_\lambda\}, \rC)$, equipped with the pencil $\{\nabla_\lambda\}$ satisfies the pre-Frobenius manifold definition.  
\end{lem}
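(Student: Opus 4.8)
The plan is to verify, one datum at a time, the four ingredients in the pre-Frobenius definition of Section~\ref{S:2.2}, feeding in the pieces of the statistical structure $(M,g,\rC)$ together with the pencil $\{\nabla_\lambda\}$. First I would produce the affine flat connection. I would observe that the pencil $\{\nabla_\lambda\}$ of \eqref{E:7} coincides, after the reparametrisation forced by the identification $\rA=-2\rC$ and $X\circ Y=\rK(X,Y)$, with the pencil $\{\nabla^\alpha\}$ of $\alpha$-connections: substituting \eqref{E:3} into \eqref{E:1} gives $\nabla^\alpha_X Y=\nabla^0_X Y+c\,\alpha\,X\circ Y$ for a universal constant $c$, so the two pencils agree up to an affine change of parameter. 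Since $M$ is an exponential family, the remark following \eqref{E:4} guarantees that $\nabla^\alpha$ is flat for $\alpha=\pm1$, so the corresponding member of $\{\nabla_\lambda\}$ is a flat connection; being also torsion free (the $\alpha$-connections are affine and torsion free), it admits a complete atlas whose transition functions are affine linear, the natural parameters $\theta$ serving as affine coordinates. This supplies the first datum.

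Second and third, the metric and the tensor come essentially for free. The compatible metric is the Fisher metric $g$ already carried by the statistical structure, and I take $\rA:=-2\rC$, equivalently $\rA(X,Y,Z):=g(X\circ Y,Z)$. That $\rA$ is a symmetric rank-three tensor is immediate from the total symmetry of the Amari--Chentsov tensor $\rC$, and in the classical (non-super) setting its evenness is automatic.

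Fourth comes the multiplication together with the invariance axiom, which is the heart of the argument. I set $X\circ Y:=\rK(X,Y)$, the difference tensor of \eqref{E:3}. Bilinearity is clear, and symmetry $\rK(X,Y)=\rK(Y,X)$ follows because both $\nabla^{\alpha}$ and $\nabla^{-\alpha}$ are torsion free, so that $\nabla^{\pm\alpha}_X Y-\nabla^{\pm\alpha}_Y X=[X,Y]$ and the antisymmetric parts in \eqref{E:3} cancel (equivalently, one may deduce symmetry from the total symmetry of $\rC$ and nondegeneracy of $g$). For the invariance relation \eqref{E:5}, \eqref{E:3} gives $g(X\circ Y,Z)=\rC(X,Y,Z)$ and $g(X,Y\circ Z)=g(\rK(Y,Z),X)=\rC(Y,Z,X)$ up to the fixed normalisation constant, whence the cyclic symmetry $\rC(X,Y,Z)=\rC(Y,Z,X)$ delivers $g(X\circ Y,Z)=g(X,Y\circ Z)$, which is exactly \eqref{E:5}.

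Having produced all four data and checked \eqref{E:5}, the statistical structure meets the definition, proving the lemma. I expect the only genuinely non-formal step to be the first one: the existence of a flat member of the pencil is not a tensorial identity but rests on the exponential-family hypothesis (flatness at $\alpha=\pm1$), and one must read ``compatible metric'' here as meaning only that $g$ enters the invariance relation \eqref{E:5}. Indeed the Fisher metric is generally not $\nabla_\lambda$-parallel, which is precisely why the statement asserts a pre-Frobenius, and not a full Frobenius, structure; all the remaining verifications are direct consequences of the total symmetry of $\rC$.
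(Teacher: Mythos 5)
Your proposal is correct and takes essentially the same route as the paper: the paper's proof consists of the single remark that the lemma ``follows from the discussion above,'' and your four verifications --- flatness of the $\alpha=\pm1$ members of the pencil for exponential families, the Fisher metric as $g$, the identification $\rA=-2\rC$, and $X\circ Y=\rK(X,Y)$ with the invariance \eqref{E:5} extracted from the total symmetry of $\rC$ via \eqref{E:3} --- are exactly that discussion made explicit. Your closing caveats (the normalisation constants relating $\lambda$ and $\alpha$, and the weak reading of ``compatible metric,'' since the Fisher metric is not parallel for the flat $\pm1$-connections) flag real loosenesses in the paper's exposition rather than gaps in your argument.
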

\begin{proof}
The proof follows from the discussion above.
\end{proof}

Now, the following statement puts a final point to the discussion on Frobenius manifolds and statistical structure.
\begin{thm}\label{T:1}
Let $(M,g, \{\nabla_\lambda\}, \rA)$ be a pre-Frobenius manifold coming from the statistical structure, as defined above. 
Suppose that $\{\nabla_\mu\}$ is a sub-pencil of $\{\nabla_\lambda \}$ formed only from those connections being flat. 
Then, the manifold is equipped with a Frobenius structure. 

Reciprocally,  if $(M,g, \{\nabla_\mu\}, \rA)$  is a Frobenius manifold, then:

\noindent 1)  the family of connections $\{\nabla_\mu\}$ on M are necessarily flat,
\,
or 

\noindent 2)  the family $\{\nabla_\mu\}$ of connections belong to a sub-pencil of connections being flat.
\end{thm}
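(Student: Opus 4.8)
The plan is to deduce the whole statement from a single curvature computation for the pencil \eqref{E:7}, in the spirit of Manin's characterisation of Frobenius manifolds \cite{Man99}. Write the symmetric multiplication as an endomorphism-valued $1$-form $L$ on $T_M$, namely $L_X(Y):=X\circ Y$, so that \eqref{E:7} reads $\nabla_{\lambda,X}(Y)=\nabla_{0,X}(Y)+\lambda\,L_X(Y)$. Since the pencil is affine in $\lambda$, its curvature \eqref{E:4} is a polynomial of degree at most two in $\lambda$. First I would expand $\rR^{\lambda}$ directly and collect powers of $\lambda$, obtaining
\[
\rR^{\lambda}(X,Y)=\rR^{0}(X,Y)+\lambda\,P(X,Y)+\lambda^{2}\,Q(X,Y),
\]
where $P(X,Y)=(\nabla_{0,X}L)_Y-(\nabla_{0,Y}L)_X$ is the exterior covariant derivative of $L$ (torsion-freeness of $\nabla_0$ kills the terms in $\nabla_{0,X}Y-\nabla_{0,Y}X-[X,Y]$), and $Q(X,Y)=[L_X,L_Y]$ is the commutator of multiplication operators.

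The second step is to read off the meaning of the vanishing of each coefficient. The quadratic term $Q(X,Y)Z=X\circ(Y\circ Z)-Y\circ(X\circ Z)$ vanishes identically if and only if, using commutativity of $\circ$, the product is associative, i.e. \eqref{E:6} holds. The linear term is the delicate one: passing to a local $\nabla_0$-flat frame $\partial_a$ (so $\nabla_0=\partial$) and lowering the upper index of $L$ with $g$, the identity $P=0$ becomes the total symmetry of $\partial_d\rA_{abc}$; by the Poincar\'e lemma this is equivalent to the local existence of an even function $\phi$ with $\rA(\partial_a,\partial_b,\partial_c)=\partial_a\partial_b\partial_c\phi$, which is precisely the potentiality axiom. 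Here one uses that $\rA$ is already totally symmetric in its three arguments (being $-2\rC$), so the only extra information in $P=0$ is symmetry in the differentiated index.

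With the computation in hand both implications are short. For the direct statement, the hypothesis that the flat connections form a whole sub-pencil $\{\nabla_\mu\}$ means $\rR^\mu=0$ for infinitely many values of $\mu$; a tensor-valued polynomial of degree at most two vanishing on an infinite set is identically zero, whence $\rR^0=P=Q=0$. Thus $\circ$ is associative and $\rA$ is potential, so both the associativity axiom \eqref{E:6} and the potentiality axiom hold and $M$ is a Frobenius manifold. Conversely, if $M$ is Frobenius then $P=Q=0$, so $\rR^\lambda=\rR^0$ is constant along the pencil; since a Frobenius structure contains a flat affine connection and this is the base point $\nabla_0=\nabla_\lambda|_{\lambda=0}$ of the pencil, we get $\rR^0=0$, and therefore $\rR^\lambda\equiv0$. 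Hence every connection of the pencil is flat, which yields simultaneously alternative (1) (the chosen $\{\nabla_\mu\}$ are flat) and alternative (2) (they sit inside the flat full pencil).

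The main obstacle I expect is not the direct/reciprocal bookkeeping but the precise identification of the linear coefficient $P$ with the potentiality axiom: one must use carefully that $\nabla_0$ is torsion-free and metric (so flat frames exist and index-lowering commutes with $\nabla_0$), and then invoke the Poincar\'e lemma to produce $\phi$. A secondary point to handle cleanly is the meaning of ``sub-pencil'': the argument genuinely needs infinitely many flat values of the parameter, since the two flat $\alpha$-connections $\alpha=\pm1$ of an exponential family alone force only $P=0$ (potentiality) and not $Q=0$; associativity, and hence the full Frobenius property, requires flatness along a one-parameter subfamily.
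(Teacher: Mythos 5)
Your proposal is correct, and it actually supplies something the paper does not: Theorem~\ref{T:1} is stated there with \emph{no proof at all} --- the text after the theorem passes directly to remarks on biaxial geometry, and the result is implicitly delegated to Manin's theory of the structure connection \cite{Man99}. Your argument is precisely that classical computation carried out in full. The expansion $\rR^{\lambda}=\rR^{0}+\lambda P+\lambda^{2}Q$ is right, with torsion-freeness of $\nabla_0$ killing the term $L_{\nabla_{0,X}Y-\nabla_{0,Y}X-[X,Y]}$ so that $P(X,Y)=(\nabla_{0,X}L)_Y-(\nabla_{0,Y}L)_X$ and $Q(X,Y)=[L_X,L_Y]$; for a commutative multiplication the vanishing of $[L_X,L_Y]$ is indeed equivalent to associativity; and the identification of $P=0$ with potentiality, via flat coordinates, index-lowering with the $\nabla_0$-parallel metric, total symmetry of $\partial_d\rA_{abc}$, and the Poincar\'e lemma, is the standard Dubrovin--Manin lemma. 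The Vandermonde-type step (a tensor-valued polynomial of degree two vanishing at infinitely many parameter values vanishes identically) correctly converts ``flat sub-pencil'' into $\rR^{0}=P=Q=0$, and in the converse your observation that the Frobenius axioms force $\rR^{\lambda}\equiv\rR^{0}=0$ shows that alternatives (1) and (2) of the theorem both hold simultaneously, which is stronger than the stated disjunction.

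Two caveats, neither fatal. First, both your converse and the $P=0\Leftrightarrow$ potentiality step rely on $\rR^{0}=0$, i.e.\ on the flat affine connection of the pre-Frobenius data coinciding with the Levi--Civita connection $\nabla_0$ of $g$ used as base point of the pencil \eqref{E:7}. This is exactly what the paper's definition postulates (``affine flat connection'' plus ``compatible metric''), so your proof matches the theorem as stated; but note that in the genuinely statistical examples (e.g.\ the multinomial family, which the paper itself realizes as a piece of a round sphere of radius $2$) the Levi--Civita connection of the Fisher metric is \emph{not} flat, and it is the $\alpha=\pm1$ connections that are --- a tension already present in the paper's Lemma~\ref{L:1}, whose proof ``follows from the discussion above.'' Second, your closing remark about $\alpha=\pm1$ is slightly off: flatness at both values forces $P=0$ and $Q=-\rR^{0}$, so it yields ``potentiality but not associativity'' exactly when $\rR^{0}\neq0$; if $\rR^{0}=0$ is part of the data, two symmetric flat values already flatten the whole pencil. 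Your main argument is unaffected, since it only invokes the infinitude of flat parameter values where the hypothesis genuinely provides a sub-pencil.
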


In particular if $(M, g, \nabla, \rA)$ is flat with respect to the connection $\nabla$, then $(M, g, \overline{\nabla}, \rA)$ is also flat with respect to $\overline{\nabla}$. This type of hidden symmetry can be recovered also by geometric means using biaxial geometry (see O. Mayer~\cite{May38} for the original definition on biaxial geometry. Initially, biaxial geometry was used for skew curves in the projective space.  Biaxial collineation included an involution. See~\cite{CoMa20} for the case of biaxial geometry in the context of paracomplex numbers and $F$-manifolds). 

One can furthermore comment on the flatness condition of $\nabla$ for the statistical structure.
\begin{lem}
If $(M, g, \nabla , \rA)$ is flat with respect to $\nabla$ then the statistical structure is Hessian. 
\end{lem}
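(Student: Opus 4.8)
The plan is to recognise a Hessian structure, in the sense of Shima~\cite{Shi77}, as precisely a flat affine connection $\nabla$ together with a metric $g$ whose covariant derivative $\nabla g$ is a totally symmetric $(0,3)$-tensor; equivalently, $g$ is locally the Hessian $\partial_i\partial_j\phi$ of a potential in the affine coordinates of $\nabla$. Since flatness of $\nabla$ is assumed, the task splits into two steps: first compute $\nabla g$ and verify its total symmetry (the Codazzi condition), then invoke flatness to manufacture the local potential.

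For the first step I would compute $\nabla g$ directly from the pencil \eqref{E:7}, writing the flat connection as $\nabla=\nabla_\lambda$ with $\nabla_{\lambda,X}Y=\nabla^{0}_{X}Y+\lambda\,X\circ Y$ and using the multiplication-invariance \eqref{E:5} in the form $g(X\circ Y,Z)=\rA(X,Y,Z)$. Evaluating
\[
(\nabla_\lambda g)(X;Y,Z)=X\,g(Y,Z)-g(\nabla_{\lambda,X}Y,Z)-g(Y,\nabla_{\lambda,X}Z),
\]
the term $X\,g(Y,Z)$ cancels the Levi--Civita contribution by metric compatibility of $\nabla^{0}$, leaving $-\lambda\bigl(\rA(X,Y,Z)+\rA(X,Z,Y)\bigr)$. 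Total symmetry of $\rA$ then collapses this to $\nabla_\lambda g=-2\lambda\,\rA$. Because $\rA$ is a totally symmetric rank-three tensor by the pre-Frobenius data, $\nabla g$ is totally symmetric for every member of the pencil, in particular for the flat one.

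For the second step I would exploit flatness to pick local affine coordinates $\{x^i\}$ in which $\nabla_{\partial_i}\partial_j=0$, so that $(\nabla g)(\partial_i;\partial_j,\partial_k)=\partial_i g_{jk}$; the symmetry just established reads $\partial_i g_{jk}=\partial_j g_{ik}$. Hence the $1$-forms $g_{jk}\,dx^j$ are closed, and a first application of the Poincar\'e lemma yields $g_{jk}=\partial_j\psi_k$ locally; the metric symmetry $g_{jk}=g_{kj}$ forces $\partial_j\psi_k=\partial_k\psi_j$, and a second application gives $\psi_k=\partial_k\phi$, whence $g_{jk}=\partial_j\partial_k\phi$. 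This exhibits $g$ as the Hessian of $\phi$ relative to $\nabla$, so $(M,g,\nabla)$ is Hessian. The main obstacle is not the computation, which is routine, but two bookkeeping points: tracking the normalisation factor relating $\rA$, $\rC$ and the multiplication $\circ$ so that $\nabla g$ lands exactly on a scalar multiple of a totally symmetric tensor, and observing that the potential $\phi$ exists only chartwise, so that \emph{Hessian} is meant in the local sense of Shima's definition.
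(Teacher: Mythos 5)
Your proof is correct, and it takes a genuinely different route from the paper's. The paper argues top--down: flatness of $\nabla$ triggers Theorem~\ref{T:1}, so the structure is Frobenius; the potentiality axiom then supplies a local function $\phi$ with $\rA(\partial_a,\partial_b,\partial_c)=\partial_a\partial_b\partial_c\phi$, and from the invariance relation \eqref{E:5} the paper reads off $g(\partial_a,\partial_b)=\partial_a\partial_b\phi$. You instead argue bottom--up, never invoking Theorem~\ref{T:1}, associativity, or potentiality of $\rA$: the computation $\nabla_\lambda g=-2\lambda\,\rA$ establishes the Codazzi (total symmetry) condition for every member of the pencil \eqref{E:7}, and flatness plus a double application of the Poincar\'e lemma in affine coordinates manufactures the metric potential. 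This is the classical ``flat statistical structure is Hessian'' argument in the sense of Shima~\cite{Shi77}, and it is more self-contained: it uses only the pre-Frobenius data (metric compatibility of $\nabla^0$, total symmetry of $\rA$, and symmetry of $\circ$, which also guarantees $\nabla_\lambda$ is torsion-free so that affine coordinates exist), and it fills in precisely the step the paper leaves implicit --- passing from $\rA=\partial^3\phi$ to $g=\partial^2\phi$ tacitly requires pairing $\rA$ with a flat identity field $e$ satisfying $e\circ X=X$, which your argument does not need. Your normalisation bookkeeping even reconciles the two routes: in affine coordinates $\partial_c g_{ab}=(\nabla_\lambda g)(\partial_c;\partial_a,\partial_b)=-2\lambda\,\rA_{cab}$, so when $\rA$ does admit a Frobenius potential $\phi$ one gets $g_{ab}=-2\lambda\,\partial_a\partial_b\phi$ up to constants, making precise (including the scalar factor the paper drops) the sense in which the metric potential and the Frobenius potential coincide. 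What the paper's route buys in exchange is brevity and the conceptual link between Hessianity and the Frobenius potentiality axiom; what yours buys is a complete, elementary derivation valid under weaker hypotheses.
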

\begin{proof}If $(M, g, \nabla,\rA)$ is flat with respect to $\nabla$ then, by theorem \ref{T:1} it is Frobenius. In particular, one of the Frobenius axioms imply potentiality, which means that $\rA(\partial_a,\partial_b,\partial_c)=\partial_a\partial_b\partial_c\phi$, where $\phi$ is a potential function, defined (everywhere) locally. In particular from the relation 
\[\rA(X,Y,Z)=g(X \circ Y,Z)=g(X, Y\circ Z)\] it follows that $g(\partial_a,\partial_b)=\partial_a\partial_b\phi$ hence $g$ is the Hessian of the potential function $\phi$.
\end{proof}

\subsection{Recollections on algebraic and toric varieties}~\label{S:2.3}
As a prelude to this section we start by recalling basic notions and properties of toric varieties. Once this is done we can proceed to the proof of our statement, bridging pre-Frobenius statistical manifolds (of exponential type) to algebraic varieties and, in particular, to toric varieties.

Let $\bK$ be a base field. We consider toric varieties over $\bK$. Many works have been developed in the context of complex numbers (see \cite{Fu} for a classical reference). However, toric varieties have been defined more generally over any number field (see for instance \cite{Da78} for a general exposition and for example \cite{BaTsch} for toric varieties over number fields).

Roughly speaking, a smooth $n$-dimensional toric variety is an algebraic variety $\cX$, together with a collection of charts $x^{(\alpha)}: U_\alpha\to \bK^n$, 
such that on the intersections of $U_\alpha$ with $U_{\beta}$ the coordinates $x^{(\alpha)}$ must be Laurent monomials in the $x^{(\beta)}$.

Supposing $\cX$ is a toric variety, we fix one chart $U_0$ with coordinates $x_1,\cdots, x_n$. Then the coordinate functions $x^{(\alpha)}$ on the remaining $U_\alpha$ (and monomials in them) can be represented as Laurent monomials in $x_1, . . ., x_n$ . 
Furthermore, if $f: U_\alpha \to \bK$ is a "regular" function on $U_\alpha$, i.e. a polynomial in the variables $x^{(\alpha)}_1,\cdots,x^{(\alpha)}_n$, 
then $f$ can be represented as a Laurent polynomial in $x_1,\cdots, x_n$ (as a finite linear combination of Laurent monomials). The monomial character of the coordinate transformation is reflected in the fact that the regularity condition for a function $f$ on the chart $U_\alpha$ can be expressed in terms of the support of the corresponding Laurent polynomial $\tilde{f} =\sum\limits_{m\in \Z^m} c_mx^m$.

A  toric variety $\cX$ with a collection of charts  $U_\alpha$ determines a system of cones $\{\sigma_\alpha\}$ in $\mathbb{R}^n$.
The variety $U$ can be recovered as the spectrum of this $\bK$-algebra $U = Spec \bK[S]$. We require that the set $S$ of "monomials" forms a basis of the space of regular functions on $U$. So,
we arrive at the fact that the ring $\bK[U]$ of regular functions on $U$ is the semigroup algebra $\bK[S]$ of a semigroup $S$ with coefficients in $\bK$. The variety $U$ is then  a spectrum of this $\bK$-algebra $U = \mathrm{Spec\,}\bK[S]$. Consider $S= \sigma \cap \Z^n$,  with  $\sigma$  being convex cone in $\R^n$ and being finitely generated. That is $\sigma$ must be polyhedral and rational. If $\sigma$ is generated by some basis of the lattice $\Z^n$ then  $U = \mathrm{Spec\,} K [\sigma \cap Z ^n]$ is isomorphic to $\A^n$.

 A toric variety is characterized by the fact that it contains an $n$-dimensional $T$ as an open subvariety, and the action of $T$ on itself by translations extends to an action on the whole variety. 

\begin{example}
For instance, to keep it simple take $K=\mathbb{C}$ and recall that the affine variety $(\Cc^{*})^n$ is a group under component-wise multiplication. A {\it torus} $T$ is an affine variety to $(\Cc^*)^n$, where $T$ inherits a group structure from the isomorphism. 

The character of a torus $T$ is a morphism $\chi:T\to \Cc^*$, that is a group homomorphism. The $n$-tuple $m=(a_1,\cdots, a_n)\in \Z^n$ gives a character $\chi^m:(\Cc^*)^n\to \Cc^*$ defined by 
\[\chi^m(t_1,\cdots,t_n)=t_1^{a_1}\cdots t_n^{a_n},\]
(this defines the Laurent monomials).

 The $\Cc$-linear space of all Laurent monomials is the ring $\Cc[t_1,t_1^{-1},\cdots,t_n,t_n^{-1}]$ of Laurent polynomials. The characters of $(\Cc^*)^n$ form a group isomorphic to $\Z^n$. Moreover, for an arbitrary torus $T$ its characters form a free abelian group $M$ of rank equal to the dimension of $T$. Furthermore, we have a 1-parameter subgroup $\Cc^* \to (\Cc^*)^n$  such that:
$t \mapsto (t^{a_1}, \cdots, t^{a_n}),$ where  $(a_1,\cdots, a_n)\in \Z^n.$

\end{example}

Let $\cV$ be a finite dimensional vector space over the field $\Q$ of rational numbers. A cone of $\cV$ is an intersection of a finite number of half-spaces. 
A subset of $\cV$  of the form $\lambda^{-1}(\mathbb{Q}_{+})$, where $\lambda: V \to \mathbb{Q}$ is a non-zero linear functional and $\mathbb{Q}_{+}$ denotes positive (or null) rational numbers, is called a half-space of
$\cV$ . A cone of $\cV$  is an intersection of a finite number of half-spaces. Note that cones are always convex, polyhedral, and rational. 
This leads to the following definition of an affine toric variety. Consider the point $(v_1,\cdots,v_k )\in \cV$. Then by $\langle v_1,\cdots,v_k \rangle$ we mean the smallest cone containing $(v_1,\cdots,v_k)$. Such a cone is said simplicial if the  $v_1,\cdots,v_k$ are linearly independent vectors.

The notion of  {\it lattice} is tightly related to the notion of toric varieties~\cite{She22}. By a lattice $\Lambda$ we mean a finite rank  free abelian group, the lattice $\Lambda^\star=\mathrm{Hom}(\Lambda,\Z)$ is called the dual  lattice of $\Lambda$. Thus a lattice of rank $n$ is isomorphic to $\Z^n$. 
By a cone in a lattice $\Lambda$ we mean a cone in the vector space $\Lambda _\Q =  \Lambda \otimes \Q$.  If $\sigma$ is a cone in the lattice $\Lambda$, then the intersection of the cone with the lattice $\sigma \cap \Lambda$ is a commutative subsemigroup in $\Lambda$.

\smallskip 

The affine scheme $Spec \bK[\sigma \cap  \Lambda]$, where $\Lambda$ is a lattice and $\sigma$ is a cone in $\Lambda$ is called  an {\it affine toric variety}.

General toric varieties are obtained by gluing together affine toric varieties; the scheme describing the gluing is given by a certain complex of cones, which we call a fan. 
\begin{dfn} $\cX$ is said to be (formally) toroidal at $x\in \cX$ if there exists a pair $(\Lambda, \sigma)$, where $\Lambda$ is a lattice and $\sigma$ a cone in $\Lambda$ with vertex, and a formal isomorphism of $(\cX, x)$ and $(X_\sigma, 0)$.  The toric variety $\cX_\sigma$  is called a local model for $\cX$ at $x$.
\end{dfn}

\smallskip 

To  each chart $U_\alpha$  in $\R^n$ corresponds a cone $\sigma_\alpha$ generated by the exponents of $ x_1^{\alpha}\cdots x_n^{\alpha}$ as Laurent polynomials in $x_1, …, x_n$. We regard an arbitrary Laurent polynomial $\tilde{f}$ as a rational function on $\cX$; as one sees easily, regularity of this function on the chart $U_\alpha$ is equivalent to $\text{Supp}(\tilde{f}) \subset  \sigma_\alpha$.  
So, the pair $(\cX, \{U_\alpha\})$ of the toric variety $\cX $ and the family of charts $ {U_\alpha}$ in $\R^n$, determines a system of cones $\{\sigma_{(\alpha)}\}$.
Therefore, in order to construct a toric variety it is sufficient to choose a system of cones $\{\sigma_{(\alpha)}\}$, under special requirements for example some system of dual cones  $\{\check\sigma_{(\alpha)}\}$ (this notion is convenient since it gives the opportunity to introduce the covariant character of the operations carried out in gluing together a variety $\cX$ out of affine pieces $U_\alpha$). 

The notion of general toric varieties is still related to affine charts $U_\alpha$ covering the variety $\cX$ with monomials. One must first of all, have a "monomial structure" on each affine chart $U_\alpha$; let us explain what this means, restricting ourselves to a single affine piece $U$.
Among the regular functions on an algebraic variety $U$ a certain subset $S$ of "monomials" is singled out. Since a product of monomials is again to be a monomial, we demand that $S$ is a semigroup under multiplication.

\section{Main theorem and its proof}~\label{S:3}
As a prelude to this section we start by recalling basic notions and properties of toric varieties (see section~\ref{S:3.1}). Once this is done we can proceed to the proof of our main statement (see section~\ref{S:3.2}).

\subsection{Lattices}\label{S:3.1}
Now, let us consider the exponential family of probability distribution defined by
\begin{equation}\label{E:exp1}
p( q;\theta) = p_0(\omega) \exp \{\theta^i q_i ( \omega) - \Psi ( \theta) \},
\end{equation}
with canonical distribution parameter $\theta=(\theta^1,\cdots,\theta^n) \in \R^n$, $\omega \in \Omega$, element of the sample space, the $q_i :\Omega \to \R$ are a family $q=\{q_i\}$ of random variables and $\Psi (\theta)$ is cumulant generating function.
The  $q_i(\omega), i \in I $ (some list of indices) is function defining directions of the coordinate axes, called  statistics (or directional sufficient statistics). This family generates a convex hull with $q_0 (\omega) = 1$. 

\begin{lem}
There exists a lattice $\Lambda$ generated by the $q(\omega)$. The group of translations $\Gamma$ acts on $\Lambda$. 
\end{lem}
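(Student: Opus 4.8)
The plan is to exhibit $\Lambda$ concretely as the additive subgroup of $\R^{n+1}$ spanned over $\Z$ by the finite family of vectors
\[
q(\omega) = \big(q_0(\omega), q_1(\omega), \dots, q_n(\omega)\big) = \big(1, q_1(\omega), \dots, q_n(\omega)\big), \qquad \omega \in \Omega,
\]
and then to check that this subgroup meets the definition of a lattice recalled in Section~\ref{S:2.3}, namely that it is a free abelian group of finite rank. The first thing I would invoke is the standing hypothesis that the sample space $\Omega$ is discrete, so that $\{q(\omega) : \omega \in \Omega\}$ is a \emph{finite} set; consequently $\Lambda = \{\sum_{\omega} m_\omega\, q(\omega) : m_\omega \in \Z\}$ is, by construction, finitely generated, with at most $|\Omega|$ generators.

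To identify $\Lambda$ as a lattice I would verify the two required properties. Finite generation is built into the definition above. For the absence of torsion, I would note that $\Lambda$ is a subgroup of the torsion-free group $(\R^{n+1}, +)$, hence has no nonzero element of finite order. The structure theorem for finitely generated abelian groups then forces $\Lambda$ to be free of finite rank, $\Lambda \cong \Z^r$ with $r = \mathrm{rank}(\Lambda) \le |\Omega|$, which is precisely the notion of lattice used in the toric constructions. Including the coordinate $q_0(\omega) = 1$ places every generator on the affine hyperplane $\{q_0 = 1\}$, so the convex hull appearing in the statement becomes the height-one slice of $\Lambda$, the homogenization that feeds the later cone $\sigma \cap \Lambda$. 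In the case relevant to the paper --- discrete models with integer- (or rational-) valued sufficient statistics --- one moreover has $\Lambda \subset \Z^{n+1}$, so $\Lambda$ is an honest discrete lattice and the eventual toric variety is defined over $\Q$.

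For the second assertion I would let $\Gamma$ be the group of translations by lattice vectors, $\Gamma \cong \Lambda$, acting on $\Lambda$ by $\lambda \mapsto \lambda + v$ for $v \in \Lambda$. Well-definedness of this action is exactly the closure of $\Lambda$ under addition established above, and the action axioms (neutrality of the zero translation and the compatibility $(\lambda + v) + w = \lambda + (v + w)$) are inherited from the group law of $\Lambda$. The action is free and transitive, so $\Lambda$ is a $\Gamma$-torsor; under the identification of $\Lambda$ with the character / one-parameter-subgroup lattice of the torus $T$, this $\Gamma$ is the restriction to $\Lambda$ of the translation action of $T$ on itself that will make the variety toric.

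I expect the lattice claim itself to be essentially routine: once $\Omega$ is finite, it is the structure theorem applied to a finite set of vectors, and discreteness of $\Omega$ is the only hypothesis doing real work. The genuinely delicate point, which I would treat conceptually rather than by calculation, is the compatibility in the previous paragraph --- namely that the elementary translation group $\Gamma$ coincides with the translation action of the torus on itself, since it is this identification, and not the abstract freeness of $\Lambda$, that is needed for the toric structure established in the subsequent proof of the main theorem.
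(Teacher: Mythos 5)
Your argument is sound, but it builds a different lattice than the paper does: the two constructions are essentially transposes of one another. The paper's proof works in the space of statistics itself: it sets $\cX=\mathrm{Span}\{1,q_1(\omega),\dots,q_n(\omega)\}$, decomposes $\cX = 1\oplus\cV$, and declares $\Lambda$ to be the $\Z$-span of the $n+1$ integer-valued functions $q_0,\dots,q_n$, asserting rank $n+1$ outright (linear independence of the statistics is implicitly assumed, not proved). You instead take the $\Z$-span of the evaluation vectors $q(\omega)\in\R^{n+1}$ indexed by sample points --- the columns of the design matrix $\cQ$ rather than its rows --- and derive freeness honestly from the structure theorem (finitely generated plus torsion-free inside $(\R^{n+1},+)$), with rank bounded by $|\Omega|$ and no independence hypothesis needed. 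Your version matches the standard algebraic-statistics convention and feeds directly into the monomial map $\pi$ of the main theorem's proof, and your caveat that a finitely generated subgroup of $\R^{n+1}$ need not be discrete (hence the restriction to integer-valued statistics) is a genuine point that the paper passes over with the phrase ``integer valued directional functions.'' The treatments of $\Gamma$ also diverge: you let $\Gamma\cong\Lambda$ act on $\Lambda$ itself, which verifies the lemma's literal wording but is a triviality (a torsor), whereas the paper lets $\Gamma$ act by translations on the ambient $\R^n$, checks that this action is free and properly discontinuous, and extracts the compact quotient torus $\cV/\Lambda$ --- the geometric content that foreshadows the toric structure. So the paper's route buys the torus explicitly and the full rank $n+1$; yours buys a rigorous freeness argument and the correct discreteness caveat. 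The identification you defer in your final paragraph --- that translation by $\Lambda$ agrees with the torus acting on itself --- is indeed the substantive point, but note that the paper's own proof does not establish it either; it only records the quotient, so your proposal is not weaker than the paper on this score.
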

\begin{proof}
The linear combination of these functions forms a linear vector space of dimension $n$. Denote it is as $\cX = \text{Span\ } \{1, q_1( \omega),, …, q_n (\omega)\}$. The space is decomposed into $\cX = 1 \oplus \cV$.
All the elements the space $\cV$ are in general positions, and this space contains whole simplex. The linear combination of integers valued functions of directional sufficient statistics
\[
q(\omega)= \alpha^0 q_0 (\omega) + \alpha^1 q_1(\omega)+  …. + \alpha^n q_n(\omega), \quad 0\leq   \alpha_i\leq 1,\qquad\sum\limits_{i= 0}^n\alpha^i  = 1.
\]
Then, the set $\Lambda = \{ q_0(\omega),   q_1(\omega),\cdots,  q_n(\omega)\}$ is a lattice of rank $n+1$ of integer valued directional functions. It is an integer lattice. 
Moreover, we know that the quotient $\cV/ \Lambda$ forms an $n$-dimensional torus in $\R^n$.
Now, consider a group of translations $\Gamma$. For each $q(\omega) \in \Lambda$ the  translation $\gamma_{q(\omega)} : \R^n \to \R^n $, given by $\gamma_{q(\omega)}(x) = x+ q(\omega)$ for $x\in \R^n$ is an automorphism of the real manifold. Since $\gamma_{q_1 +q_2}=  \gamma_{q_1} +  \gamma_ {q_2}$ the translation forms a group $\Gamma$, acting freely and discretely on $\R^n$. The set $z+ U,\, u \in U$ is neighborhood of $z$ and  $\gamma_ {q_1(z+ U)} \cap (z+ U)= 0$ for $q\ne 0\in \Lambda$.
Thus $\R^N / \Lambda$ is a compact manifold.
\end{proof}

\subsection{Toric and algebraic varieties}\label{S:3.2}

\vspace{5pt}
In the case of an exponential family defined on a finite sample space $\Omega$. Take $\Omega=\{\omega_1,\cdots,\omega_m\}$
and let us consider the directional statistics as the $n\times m$ matrix $\cQ=[q_{ij}]_{i=1,\cdots, n; \, j=1,\cdots,m}$, where $q_{ij} = q_i(\omega_j)$   and $q_0(\omega_j)=1$,  then 
\[
\exp\left\{\sum_{i=1}^n \theta^iq_{ij}\right\}=\prod_{i=1}^n t_i^{q_{ij}}=\boldsymbol{\tau_j}, \qquad  t_i=e^{\theta^i}\in \R_>.
\]
Let us assume that the $q_{ij}$ are integers. Then $\boldsymbol{\tau_j}$, for $j=1,\cdots,m$, are Laurent polynomials in $t_i$, which introduce a monomial parametrization~\cite{MaPi10}.

\smallskip 

We show that statistical manifolds can be considered as toric varieties. 

\begin{thm}
Let $\Omega$ be the sample space. If $\Omega$ is discrete then the exponential statistical pre-Frobenius manifold can be identified to a toric variety. If $\Omega$ is not discrete then the exponential statistical pre-Frobenius manifold can be identified to an algebraic variety. 
\end{thm}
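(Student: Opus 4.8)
The plan is to take the monomial parametrization displayed just before the statement at face value and recognise it as the defining datum of a toric variety, using the lattice $\Lambda$ and translation group $\Gamma$ produced in the preceding lemma to supply the cone and the torus action. In the discrete case $\Omega=\{\omega_1,\dots,\omega_m\}$ I would set $t_i=e^{\theta^i}$ and read off the monomial map
\[
\Phi\colon T\to \A^m,\qquad \Phi(t)=(\tau_1,\dots,\tau_m),\quad \tau_j=\prod_{i=1}^n t_i^{q_{ij}},
\]
where $T=(\bK^*)^n$ is the parameter torus. Each coordinate $\tau_j$ is the character $\chi^{a_j}$ attached to the $j$-th column $a_j=(q_{1j},\dots,q_{nj})$ of $\cQ$, which by the previous lemma lies in $\Lambda$. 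Thus $\Phi$ is exactly a monomial map in the sense of the toric dictionary of Section~\ref{S:2.3}.

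The second step is to form the cone $\sigma=\langle a_1,\dots,a_m\rangle$ spanned by the columns of $\cQ$, together with the semigroup $S=\sigma\cap\Lambda$, and to show that the Zariski closure $\overline{\Phi(T)}$ is the affine toric variety $\mathrm{Spec}\,\bK[S]$. Concretely, the vanishing ideal of the image is the lattice (binomial) ideal generated by the $x^u-x^v$ with $u,v\in\N^m$ satisfying $\sum_j u_j a_j=\sum_j v_j a_j$, which is precisely the semigroup-algebra presentation; the torus $T$ appears as the dense orbit and $\Gamma$ extends the $T$-action to all of $\cX$. Because the probabilities are normalised by the partition function $e^{\Psi(\theta)}=\sum_j p_0(\omega_j)\tau_j$, the statistical manifold itself is identified with the positive real locus of the projective toric variety $\cX_{\cQ}\subset\Pp^{m-1}$ obtained by homogenising with the row $q_0\equiv 1$; this is the log-linear (toric) model familiar from algebraic statistics. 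For rationality I would invoke the standing assumption that the $q_{ij}$ are integers: all exponent vectors lie in $\Z^n$ and the binomial generators have coefficients in $\{\pm 1\}$, so $\cX_{\cQ}$ is defined over $\Q$, i.e. a $\Q$-toric variety.

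For the non-discrete case the sum over $\Omega$ becomes an integral and there is no finite exponent matrix, so the monomial description fails. The plan there is instead to argue that the sufficient statistics still generate a finitely generated $\Q$-algebra of functions whose relations are polynomial, so that the image is cut out by finitely many algebraic equations and is an algebraic variety, albeit no longer toric.

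I expect the main obstacle to be twofold. The delicate algebraic point in the discrete case is that the semigroup $\N\{a_1,\dots,a_m\}$ need not be saturated in $\sigma\cap\Lambda$, so the naive semigroup variety can differ from the normal toric variety $\mathrm{Spec}\,\bK[\sigma\cap\Lambda]$; one must either pass to the normalisation or state the identification only up to normalisation, and this is the step I would treat most carefully. The genuinely hard part, however, is the non-discrete case, where ``identified to an algebraic variety'' must be given a precise meaning: one has to fix which finite collection of sufficient statistics is used and verify finite generation of the ideal of relations, since a continuous exponential family need not have an algebraic image in general.
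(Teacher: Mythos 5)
Your proposal is correct in the discrete case and takes essentially the same route as the paper: the paper's proof is exactly the toric-ideal construction you describe --- identify the columns of $\cQ$ with Laurent monomials $t^{\bf q_i}$ (the paper works with the transposed convention, mapping $y_i\mapsto t^{\bf q_i}$ for $t\in(\Q^*)^m$ rather than parametrizing from the torus $(\bK^*)^n$ as you do, but the displayed formula $\tau_j=\prod_i t_i^{q_{ij}}$ matches your direction), take the kernel of the induced semigroup-algebra homomorphism $\hat\pi$, note it is spanned by the binomials ${\bf y}^{\bu}-{\bf y}^{\bv}$ with $\pi(\bu)=\pi(\bv)$, and declare the Zariski closure of the monomial image to be the affine toric variety, chart by chart. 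Your additions --- the projective homogenisation via the partition function, identifying the statistical model with the positive real locus of $\cX_\cQ\subset\Pp^{m-1}$, and the explicit rationality remark --- go beyond what the paper writes but are standard and correct. The two obstacles you flag are genuine and, notably, are not resolved in the paper either: the paper never addresses saturation of $\N\cQ$ in $\sigma\cap\Lambda$ (so its identification is really only with the possibly non-normal semigroup variety, or with the normal one up to normalisation), and its entire treatment of the non-discrete case is the assertion that ``the construction is essentially the same'' but without a lattice one ``simply'' gets an algebraic variety --- no finite generation of the ideal of relations, nor even a precise meaning of the identification, is supplied. So your plan is, if anything, more careful than the published argument on exactly the two points where that argument is thinnest.
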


\begin{proof}
Let us start with  $\Omega=\{ \omega_1,\cdots,\omega_m\}$ a finite, discrete sample space.
~

1. An exponential statistical pre-Frobenius manifold is given by the family $S=\{p(q,\theta)\}$ where $p(q,\theta)=\exp{\{\sum\theta^iq_i(\omega)-\Psi(\theta)\}}$ where $q_i(\omega)$, where $\omega\in\Omega.$

2. We use the construction of the family $S$ as a manifold, using the atlas $\{U_i,\phi_i\}_{i\in I}$. 
 
 3. For any point $P$ of the manifold $S$, there exists an $n$-vector space spanned by \vfill $\{1,q_1(\omega),\cdots,q_n(\omega)\}$, where $q_0=1$ and this follows form the normalisation. 
 
 4. Assume that $\Omega$ is discrete. Take the polynomial ring $\Q[{\bf y}]:=\Q[y_1,\cdots,y_n]$. Fix the subset $\cQ=\{\bf{q}_1,\cdots,\bf{q}_n\}$ where ${\bf q_i}=(q_{i1},\cdots,q_{im})^T$ of $\Z^m$  and the components $q_{ij}:=q_i(\omega_j)$,  with $\omega_j \in \Omega$. 
In other words we have a matrix $\cQ$ of size $m\times n$ with integer coefficients and columns given by the set $\{{\bf q_1},\cdots, {\bf q_n}\}$.

Note that each column ${\bf q_i}$ corresponds, for a given fixed event $\omega_j\in \Omega$, to a set of directional sufficient statistics. There exists $n$ such sets of directions.

\smallskip

5. Each vector ${\bf q_i}$ is identified with a monomial $t^{\bf q_i}$ in the Laurent polynomial ring $\Q[t^{\pm}]$, where $\Q[t^{\pm}]:=\Q[t_1,\cdots,t_m,t_1^{-1},\cdots,t_m^{-1}]$. More precisely, we know from the previous paragraph that $t_i={\exp{\theta_i}}$. 
So, the monomial $t_i^{q_i(\omega_j)}=\exp{\{\theta_iq_i(\omega_j)\}}$. Hence, the monomial $t_i^{\bf q_i(\omega_j)}$ can be interpreted as the probability of having the canonical parameter $\theta_i$ in the direction of $q_i(\omega_j)$ for the event $\omega_j.$  
 
In particular in the statistical manifold setting we have that this corresponds to $p(q;t)=t_1^{{\bf q}_1}\cdots t_n^{{\bf q}_n}$.

Now, we apply the following construction. First, take the semigroup homomorphism:
 \[\pi: \N^n\to \Z^m,\quad \bu=(u_1,\cdots,u_n)\mapsto\sum_{i=1}^n u_i{\bf q_i}.\] 
 The image of $\pi$ is the semigroup:
 \[\N\cQ=\{\lambda_1{\bf q_1}+\cdots \lambda_n{\bf q_n}\, :\, \lambda_1,\cdots, \lambda_n\in \N\}\]

This map $\pi$ lifts to a homomorphism of semigroup algebras:
\[\hat{\pi}: \Q[{\bf y}]\to \Q[t^{\pm1}],\quad y_i\mapsto t^{\bf q_i}.\]

Now, to every point $P\in U_i$, where $(U_i,\phi_i)$ is a chart, we apply this homomorphism construction. 
The kernel of $\hat{\pi}$ is the toric ideal $\cI_{T}$ of $\cQ$ and its corresponding affine variety is irreducible. It is the Zariski closure of the set of points $(t^{\bf q_1}, \cdots, t^{\bf q_n})$ where $t\in (\Q^*)^m$. The multiplicative group $(\Q^*)^m$ is known as the $m$-dimensional algebraic torus. The variety of the form $V(\cI_{T})$ is the affine toric variety 


The kernel $\textrm{Ker}(\pi)$ amounts to solving in $\N^n$ a system of equations, where coefficients are the directional sufficient statistics. This corresponds to a linear regression.


To be more precise, the toric ideal $\cI_T$ is spanned as a $\Q$-vector space by the set of binomials: 
\[\{\, {\bf y}^\bu-{\bf y}^\bv\, :\, \bu,\bv \in \N^n\quad  \text{with}\quad \pi(\bu)=\pi(\bv)\, \}.\]

In particular, each polynomial in $\cI_{\cA}$ is a $\Q$-linear combination of these binomials. 

Moreover, since every vector $\bu\in \N^n$ can be written uniquely using the decomposition $\bu=\bu^+-\bu^-$, where $\bu^+$ and $\bu^-$ are non-negative and have disjoint support, the $\text{Ker}(\pi)$ for the sublattice of $\Z^n$ consists of all vector such that $\pi(\bu^+)=\pi(\bu^-)$. 
So, the previous statement means that $\cI_T$ is generated by ${\bf y}^{\bu^+}-{\bf y}^{\bu^-}$ and $\bu\in \text{Ker}(\pi).$

This construction holds for any point $P$ in any chart $(U_i,\phi_i)$ of the manifold of probability distributions. 
The coordinate functions $y_i$ on the chart $U_i$ can be expressed as Laurent monomials in the adequate coordinates.  
In changing from one chart to another the coordinate transformation is monomial. 
This forms a smooth toric variety, where we have a collection of charts $y_i: U_i\to\Q^n$, such that on the intersections of $U_i$ with $U_j$ the coordinates $y_i$ must be Laurent monomials in the $y_j$.

Assuming that $\Omega$ is not discrete, the construction is essentially the same. But we cannot have a lattice and thus this does not give a toric variety. It simply gives an algebraic variety. 
\end{proof}

\section{Web designs}~\label{S:2}
In \cite{CoMaMaA,CoMaMaB} the utility of web theory for models of databases subject to noise (i.e. spaces of probability distributions on a finite set)
turned out to be a successful tool for unravelling hidden symmetries of these spaces. For instance, this allowed to show the existence of mathematical structures describing symmetries of relevant geometries where Commutative Moufang Loops (CML) play a prominent role. 

Recall that, independently, CML relate strongly to algebraic geometry. More precisely, there is a tight relation between Moufang loops and the set of algebraic points of a smooth cubic curve in a projective plane $\mathbb{P}^2_K$ defined over a field $K$. Indeed, the set $E$ of $K$-points of such a curve $X$ forms a CML where the composition law is $x\circ y = u\star (x\star y)$, if $u+x+y$ is the intersection cycle of $X$ with a projective line $\mathbb{P}^1_K \subset \mathbb{P}^2_K$ (see~\cite{Man86}, Chp.1) 

This remark motivates our further investigations in this paper. Indeed, taking into account the previous section, it was proven that certain models related to transmitting information have the structure of an algebraic variety (toric variety). Using the results presented in the context of information theory and Commutative Moufang loops from~\cite{CoMaMaA,CoMaMaB} we can relate this to problems concerning the development of {\it new relations} between algebraic geometry, information theory and web theory. This development reinforcing thus the philosophy of algebraic statistics (i.e. motivated by explaining statistics by means of algebraic tools) and results therein. 

It is rarely the case that algebraic techniques are ready-made to address statistical challenges. Usually new algebraic results in this domain need to be developed. This section is precisely about proving and illustrating that our webs are algebraizable (as well as hexagonal and isoclinic). Since statistical data are encapsulated in the web structure, the property of having algebraizable webs provides us with a powerful algebraic tool for addressing statistical problems.

The importance of considering algebraic webs comes from the following facts. First, algebraic webs are closely related to loops and quasigroups. 
The left loops (loops with a left type of multiplication) are in bijection with left homogeneous spaces (which are equipped also with a left type of multiplication). Therefore, we can state that left loops and  left homogeneous spaces describe the same structure on a given affine connection manifold.

Now, consider an affine connection manifold (see \cite{No} for the definition). The space of probability distributions on a finite set is precisely an affine connection manifold~\cite{CoMa20}.
For these types of manifolds, one can introduce a specific type of local loop in the neighbourhood of any point. For details concerning the properties of the considered type of loop we refer to \cite{Kika}. These loops are uniquely determined by means of the parallel transport along a geodesic (see \cite{Sa1,Sa2} for developments on this topic). Then, this family of local loops defines a covering of the manifold and uniquely determines all affine connections. Hence, the application of algebraic webs provides us with important information concerning the geometrical structure of the manifold under investigation.

 \smallskip 
 
 \subsection{Definition of webs}
Consider $(x_1,\cdots, x_n)$ local coordinates on a smooth $n$-manifold $M$ and a foliation of dimension $n-k$, or codimension $k$, has leaves defined by equations of the form \[f_1(x)=const,\cdots, f_k(x)=const,\quad x=(x_1,\cdots, x_n),\]
where $f_i$ are smooth functions and their Jacobian matrices are of rank $k$ everywhere. The functions $f_1,\cdots, f_k$ are defined up to an arbitrary smooth transformation. A $d$-web consists of $d$ foliations where the leaves are everywhere in general position. 

More concretely for $\cD$ an open domain of a differentiable manifold $M^{nr}$ of dimension $nr$, a $d$-web will be denoted by the symbol $W(d,n,r)$ (or $W$ when there is no possible confusion) where:
\begin{itemize}
\item the number of foliations is $d$;
\item the codimension of the foliations is $r$;
\item the dimension of the smooth manifold is $nr$.
\end{itemize}

For simplicity, we will call a web $k$-dimensional whenever its foliations are of dimension $k$.

Given a web, it is natural to ask whether a notion of equivalent webs exists. In particular, for a given web with $d$ foliations given by \[f_i(x_1,\cdots , x_n)=\text{const}\quad 1\geq i\geq d,\]
where we suppose that $f_i$ are smooth with non-null gradient. Then, the function $f_i$ can be replaced by the function $g_i(f_i)$ such that the gradient of $g_i$ is non-null, without changing the $i$-th foliation.

Webs can be hexagonal, algebraizable, isoclinic, parallelizable, Grassmanniazable. We discuss these definitions below, since  we will prove that, on statistical Frobenius manifolds,  the webs satisfy those properties. 

\subsection{Hexagonal and parallelizable webs} 
The class of webs being {\it hexagonal} forms an important class. Figure~\ref{F:hex} presents an example of a hexagonal webs. Let $\cD$ be a two dimensional domain. Take a point $p \in \cD$ and consider three regular families of smooth curves in $\cD$, being in general position. In the neighbourhood of that point $p$, where the web is given, one can construct a family of hexagonal figures. However, the problem occurring in certain cases is that the hexagon might not be closed (see figure Figure 1 in~\cite{Go88}) and in this case the web cannot be hexagonal. 

Note that the case of families of three webs formed each by a set of three parallel lines gives exactly a (closed) hexagon. This specific example where the three web are formed by three families of parallel straight lines is called {\it parallelizable}. A 3-web equivalent to a parallel web is called parallelizable.

\begin{exa} 
One can consider in a $2r$-dimensional affine space $\A^{2r}$, three families of parallel $r$-dimensional planes, which are in general position. They form a three-web, which is called a {\it parallel} three-web. 
\end{exa}

Furthermore a useful statement is that: 

\begin{thm}\label{T:Ash}[Thm.1.5.2 and Cor. 1.5.4~\cite{Go88}]
A web $W(n+1,n,r)$ for $n>2$ is parallelizable if and only if it is torsionless. 
Moroever, given an $(n+1)$-web being parallelizable, all its $(k +1)$-subwebs are also parallelizable.

\end{thm}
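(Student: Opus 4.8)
The plan is to realize $W(n+1,n,r)$ as a $G$-structure on the $nr$-dimensional domain $\cD$ and to work with its canonical (Chern) connection, whose torsion and curvature are the fundamental differential invariants of the web. First I would choose an adapted coframe: block-valued $1$-forms $\omega^i$ ($i=1,\dots,n$, each an $r$-vector of forms) such that the $i$-th foliation is cut out by $\omega^i=0$ and the last foliation by the diagonal combination $\sum_{i=1}^n\omega^i=0$. General position guarantees such a coframe exists and is determined up to the web structure group, so that the associated connection forms $\theta$ are canonically attached to the web. Writing the structure equations schematically (suppressing the within-block indices) as
\[
d\omega=\theta\wedge\omega+T,\qquad d\theta=\theta\wedge\theta+R,
\]
identifies the torsion tensor $T$ and the curvature tensor $R$ as the obstructions to flatness of the web connection, and parallelizability becomes equivalent to this connection being flat, i.e. $T=0$ and $R=0$ simultaneously.

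The easy direction, parallelizable $\Rightarrow$ torsionless, I would dispose of first. If the web is locally equivalent to a parallel web, then in the affine coordinates supplied by that equivalence the adapted coframe is closed and the web connection is the flat affine connection, so $T$ and $R$ vanish identically. Since $T$ is invariant under local web equivalence, the original web is torsionless.

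The hard direction is torsionless $\Rightarrow$ parallelizable, and this is where the hypothesis $n>2$ does all the work; the main obstacle will be to show that $T=0$ already forces $R=0$. I would obtain this by prolongation: applying $d$ to the first structure equation (equivalently, exploiting the Bianchi identity coming from $d^2\omega=0$) yields algebraic relations between the covariant derivative of $T$ and the curvature $R$, and with $T\equiv 0$ these collapse to purely algebraic constraints on the components of $R$. The crucial point is a dimension count on the foliation indices $i,j,k\in\{1,\dots,n\}$: for $n>2$ the index range is large enough that these constraints are non-degenerate and annihilate every curvature component, whereas at $n=2$ there are too few indices and $R$ survives — which is exactly why the classical three-web case needs the vanishing of the curvature imposed separately. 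Once both $T=0$ and $R=0$, the web connection is flat, and the Poincaré/Frobenius theorem for flat affine connections produces local affine coordinates in which all $n+1$ foliations are families of parallel $r$-planes, i.e. the web is parallelizable.

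Finally, for the subweb assertion I would argue geometrically, with no further computation. A parallelizable $W(n+1,n,r)$ is, after the equivalence $\phi$ produced above, an honest parallel web in affine coordinates. Selecting any $k+1$ of its $n+1$ foliations, the \emph{same} diffeomorphism $\phi$ carries them to $k+1$ families of parallel $r$-planes, which remain in general position because any subfamily of foliations in general position is again in general position. Hence every $(k+1)$-subweb is itself parallel, and therefore parallelizable.
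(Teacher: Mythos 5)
The paper does not prove this statement at all: it is quoted verbatim from Goldberg's book (Theorem 1.5.2 and Corollary 1.5.4 of \cite{Go88}) and used as a black box, so there is no in-paper argument to compare yours with. What you have written is, in outline, a reconstruction of the proof in the cited source: Goldberg's argument is exactly the one you sketch, namely pass to the adapted coframe $\omega_i^\alpha$ with the $(n+1)$-st foliation cut out by $\sum_i\omega_i=0$, read off torsion and curvature of the canonical (Chern) connection from the structure equations, and observe that with $T\equiv 0$ the identity $0=d^2\omega_i$ yields $\omega_i^\beta\wedge\Omega_\beta^\alpha=0$ for every $i$, after which, for $n>2$, expanding $\Omega_\beta^\alpha$ in the basis $\omega_j^\gamma\wedge\omega_k^\delta$ and wedging with a \emph{third} distinct foliation index kills every curvature component (for $n=2$ no third index exists, which is precisely why planar/three-web parallelizability needs curvature imposed separately). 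So your route is the standard and correct one.

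Two caveats. First, the crux of your hard direction --- ``these constraints are non-degenerate and annihilate every curvature component'' --- is asserted via a heuristic dimension count rather than carried out; since this implication $T=0\Rightarrow R=0$ \emph{is} the entire content of the theorem, a complete write-up must do the Cartan-lemma index computation (it is short: from $\omega_i^\beta\wedge\Omega_\beta^\alpha=0$ for all $i$, the linear independence of the three-fold products $\omega_i^\beta\wedge\omega_j^\gamma\wedge\omega_k^\delta$ with $i,j,k$ pairwise distinct forces all components of $\Omega$ to vanish). Second, your subweb argument uses a loose notion of subweb: in Goldberg's terminology a $(k+1)$-subweb of $W(n+1,n,r)$ is not merely a subcollection of $k+1$ foliations on the ambient $nr$-manifold (that would have the wrong dimension count, $W(k+1,n,r)$ rather than $W(k+1,k,r)$); it is the trace of the chosen $k+1$ foliations on a $kr$-dimensional intersection of leaves of the omitted $n-k$ foliations. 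Your affine-model argument survives this correction with one extra line: in the parallelizing coordinates the leaves of the omitted foliations are affine subspaces, their intersections are again affine, and the retained foliations trace families of parallel $r$-planes in general position on them; hence every $(k+1)$-subweb is parallel, so parallelizable.
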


We prove the following statement. 
\begin{prop}
Consider the web $W=W(2,3,r)$, with $r\geq 1$ on the statistical Frobenius manifold. Then, the web $W$ is hexagonal.  
\end{prop}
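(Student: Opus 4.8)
The plan is to show that $W$ is locally equivalent to a \emph{parallel} three-web, i.e. to three families of parallel $r$-planes, for which the hexagonal closure figure automatically closes (cf. the Example and Figure~\ref{F:hex}); since hexagonality is preserved under local web equivalence, this proves the proposition. The engine of the argument is the flatness secured by Theorem~\ref{T:1} together with the toric description of Section~\ref{S:3}: on a statistical Frobenius manifold the structure connection $\nabla$ is flat, and by the Main Theorem the manifold is a toric variety whose transition functions between charts are Laurent monomials in the $t_i$.

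First I would pass to the canonical coordinates $\theta^i=\log t_i$, which are precisely the $\nabla$-flat affine coordinates furnished by Theorem~\ref{T:1}; in these coordinates the monomial coordinate changes become affine-linear. Next I would identify the leaves of the three foliations defining $W$ as level sets of the exponential (monomial) functions $\boldsymbol{\tau_j}=\prod_i t_i^{q_{ij}}=\exp\{\sum_i\theta^i q_{ij}\}$ attached to the directional statistics $q_{ij}$. Taking logarithms, the equation $\boldsymbol{\tau_j}=\text{const}$ becomes the affine-linear equation $\sum_i\theta^i q_{ij}=\text{const}$, so each leaf is an $r$-plane and each foliation is a family of parallel $r$-planes in the single $\theta$-chart. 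Thus all three foliations are simultaneously linear, which is exactly the defining property of a parallel three-web.

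The conclusion then follows: a parallel three-web is hexagonal (the hexagon built from three families of parallel $r$-planes closes, as in Figure~\ref{F:hex}), and hexagonality transfers back to $W$ through the local diffeomorphism. The main obstacle I anticipate is twofold. First, one must check that the chosen three families of $q$-hyperplanes are genuinely in general position and that this selection is globally consistent across the atlas $\{U_i,\phi_i\}$, so that a single web (rather than a chart-dependent configuration) is obtained; the monomial character of the transition functions is what guarantees this. Second, for codimension $r>1$ the hexagonal closure for a \emph{multidimensional} three-web is entangled with isoclinicity, so beyond linearity I would verify closure either by restricting to a transversal $2$-plane and reducing to the planar picture of Figure~\ref{F:hex}, or invariantly by observing that the Chern (web) curvature is built from the curvature of $\nabla$ and hence vanishes by Theorem~\ref{T:1}.
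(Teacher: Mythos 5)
Your proof is correct in substance but takes a genuinely different route from the paper's. The paper's proof is a two-line invariant argument: the Frobenius condition forces the Chern connection attached to the web to be flat and torsion-free, and this is taken as fulfilling the hexagonality criterion --- no coordinates, no explicit foliations. You instead linearize: you pass to the flat affine coordinates $\theta^i=\log t_i$ furnished by Theorem~\ref{T:1}, observe that the monomial leaf equations $\boldsymbol{\tau_j}=\mathrm{const}$ become affine-linear equations $\sum_i \theta^i q_{ij}=\mathrm{const}$, and conclude that the web is parallel, hence hexagonal. What your route buys: it is constructive; it actually establishes the stronger property of parallelizability (which the paper proves separately, in Proposition~\ref{T:para} via Theorem~\ref{T:Ash} and again geometrically via Cevians); and it sidesteps the paper's unargued leap that flatness of the structure connection $\nabla_\lambda$ entails flatness and torsion-freeness of the web's Chern connection --- these are a priori different connections, and the paper's proof simply asserts the implication. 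What the paper's route buys: brevity, and independence from any particular presentation of the web, whereas your argument requires the web to be the one cut out by the $\boldsymbol{\tau_j}$, something the proposition never pins down (though the paper's own proof is equally silent on which web is meant). Two points you flagged deserve explicit repair rather than a remark: for codimension $r>1$ a single scalar equation $\sum_i\theta^i q_{ij}=\mathrm{const}$ cuts a hyperplane, not a codimension-$r$ leaf, so each foliation should be defined by a block of $r$ such linear forms with the three blocks in general position; and for the closure step you do not need Theorem~\ref{T:Ash} at all (it is stated only for $n>2$, while a three-web has $n=2$) --- for a parallel web the hexagonal figure closes by direct verification, exactly as in Figure~1.
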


\begin{proof}
 Let $W$ be the web of the statistical Frobenius manifold. Having the Frobenius condition implies that the Chern connection is flat and that the torsion vanishes. Thus, it fulfils the hexagonality property of the web.
\end{proof}

Moreover, we are able to show that a class of  webs of type $W(n+1,n,r)$ on the statistical Frobenius manifold is parallelizable.

\begin{prop}\label{T:para}
Suppose that we have a web $W(n+1,n,r)$, with $n>2$ on the statistical Frobenius manifold. Then $W$ is parallelizable.
\end{prop}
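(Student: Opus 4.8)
The plan is to reduce the statement to Theorem~\ref{T:Ash}, whose dimensional hypothesis $n>2$ is exactly what we assume, by showing that the web $W(n+1,n,r)$ carried by the statistical Frobenius manifold is torsionless.

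First I would argue, along the same lines as in the proof of the preceding proposition on hexagonality, that the Frobenius hypothesis is precisely what forces the torsion to vanish. By Theorem~\ref{T:1}, the Frobenius structure on $M$ makes the structure connection $\nabla_\mu$ flat; the Chern connection of the web, built from the web foliations via the Chern--Griffiths construction, then inherits this flatness. The web-theoretic torsion tensor --- the lowest-order Akivis structure tensor in the structure equations of the $(n+1)$-web --- is governed by the difference tensor $\rK$ together with the curvature of the structure connection $\nabla_\mu$; under the flatness hypothesis this curvature vanishes, so the torsion of $W$ is identically zero, i.e.\ $W$ is torsionless.

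Having established that $W(n+1,n,r)$ is torsionless, I would then invoke Theorem~\ref{T:Ash} directly. Since $n>2$ by assumption, the equivalence ``parallelizable if and only if torsionless'' applies verbatim, and the vanishing of the torsion yields at once that $W$ is parallelizable. As a bonus, the second assertion of Theorem~\ref{T:Ash} then guarantees that every $(k+1)$-subweb of $W$ is parallelizable as well.

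The main obstacle is the first step: one must verify carefully that the web-theoretic torsion appearing in Theorem~\ref{T:Ash} (in the Akivis/Chern--Griffiths sense) is genuinely the object annihilated by the flatness of the Frobenius structure connection. The $\alpha$-connections are already torsion-free as affine connections by construction, but the torsion invariant relevant to parallelizability is a separate web quantity; the real content lies in the identification bridging the information-geometric data $(g,\rC,\{\nabla_\mu\})$ with the structure equations of the web. Once this bridge is secured, the conclusion is a formal application of Theorem~\ref{T:Ash}.
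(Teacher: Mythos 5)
Your proposal follows essentially the same route as the paper: its proof likewise asserts that for a Frobenius manifold the Chern connection is flat and torsionless, and then applies Theorem~\ref{T:Ash} to conclude parallelizability (your observation about $(k+1)$-subwebs is also the second assertion of that theorem). One caution on your intermediate justification: you infer vanishing of the web torsion from vanishing of the curvature of the structure connection, which is a non sequitur, since torsion and curvature are independent tensors of a connection --- the paper instead treats flatness and torsion-vanishing of the Chern connection as two separate consequences of the Frobenius condition, exactly as in its proof of the preceding hexagonality proposition, so the bridge you flag as the ``main obstacle'' is genuinely the unproved content in both arguments.
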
 
\begin{proof}
Indeed, for a Frobenius manifold the Chern connection is flat and torsionless. Applying the theorem~\ref{T:Ash} the web is parallelizable.
\end{proof}

\medskip
\subsection{Foliations for the multinomial distribution}

Let $(\Omega,{\mathcal P}(\Omega),P)$ be a probability space, where $\Omega =\{\omega_0,x_1,\dots,\omega_d\}$ a finite
space of cardinality $\#\Omega =d+1$ and ${\mathcal P}(\Omega)$ the algebra of all measurable
subsets of $\Omega$, $\#{\mathcal P}(\Omega)=2^{n+1}$. This algebra is generated by the family
$\{\{\omega_0\},\{\omega_1\},\dots \{\omega_d\}\}$  of the $d+1$ one point subsets of $\Omega$. The finite
probability $P$, absolutely continuous with respect to the uniform measure is well
 defined by its distribution function:
\begin{equation}
p^i=P[\{\omega_i\}], \qquad
 0< p^i<1,\qquad \sum\limits_{i=0}^d p^i=1.
\end{equation}

The distribution $p$ appears as a random variable $p:X \to
[0,1]$ which can be written:
\begin{equation}
P=\sum_{i=0}^d p^i\delta_{\omega_i},\quad\text{ with } \quad0< p^i<1,\quad \sum\limits_{i=0}^d p^i=1.
 \end{equation}
where\[\delta_{\omega_k}(\omega)= \begin{cases}
1&\text{ if  $\omega=\omega_k$},\\ 0& \text {otherwise}.
\end{cases}
\]

\subsection*{The 2-dimensional case}

In what follows we recall a construction, which is naturally generalised to any dimension $n$.
Let,  $(\Omega_3, \A_3)$ be a measurable space where $\Omega_3=\{\omega_{1},\omega_{2},\omega_{3}\}$ and $\A_3$  the algebra of the subset of $\Omega$, of cardinality $\# \A_3=2^{3}$ (the algebra of events). 

Any probability distribution $P$ on the algebra $\A$ is given by a triplet $(p^{1},p^{2},p^{3})$ such that \[ p^{j}=P[\{\omega_{j}\}],\quad j=1, 2, 3\] (the probability of the event (atom) $\{\omega_{j}\}$ of $\A_3$). 

In the following we associate a probability distribution $P$ with a point of the simplex
\begin{equation}\label{E:11.1}
P\leftrightarrow \mathbf{p} \in \left\{(p^{1},p^2,p^3) \mid  \ p^{1}+p^2+p^3=1\ ;\ \forall \, j,\ p^{j}\geq 0 \right\}.
\end{equation}

\subsection*{Embedding in $\R^3$}

Let $\vec e_1=(1,0,0), \vec e_2=(0,1,0), \vec e_3=(0,0,1)$ be the canonical basis of the affine space $\R^3$ where we fixed the origin at $(0,0,0)$.

The probability distribution concentrated on the elementary events $\{\omega_{k}\}, \ P[\{\omega_k\}]=1$, can be identified with the basis vectors
\begin{equation}
\vec e_{k}=(e_{k}^{1},e_{k}^{2}, e_{k}^{3}), \text{ with } e_{k}^{j}=\delta_{k}^{j}.
\end{equation}

The family $\{\vec e_{k}\}_{k=1}^{3}$ defines a canonical basis of $\mathbb{R}^{3}$ ($\mathbf{e}_{i} \cdot \mathbf{e}_{j} = \delta_{i}^{j}$ where $\cdot$ denotes the scalar product in $\mathbb{R}^{3}$).
 
Then the probability $P$ is in bijection with the vector:
\begin{equation}\label{E:vectp}
\vec p= p^{1}\vec e_1+ p^2\vec e_2+p^3\vec e_3, \text{ where } p^{j}\geq 0,\ p^1+p^2+p^3=1.
\end{equation}

\begin{center}

\begin{tikzpicture}[scale=1.0]
 \draw(0.12,-0.2) node{$\mathbf{0}$};
 
 \draw [black,dashed,->] (0, 0) -- (0,2.9);
 \draw[black,dashed,->]  (0, 0) -- (3.0,-0.43);
\draw [black,dashed,->] (0, 0) -- (-1.98,-1.0);

\draw [black] (0, 3) -- (-2,-1);
\draw [black] (0, 3) -- (3.2,-0.45);
\draw [black] (-2, -1) -- (3.3,-0.45);

\draw(0.4,3.2) node{$\be_{3}=(0,0,1)$};
\draw(4.2,-0.5) node{$\be_{2}=(0,1,0)$};
\draw (-1.6,-1.2)node{$\be_{1}=(1,0,0)$};

\draw(0.2,1.5) node{$\vec{e}_{3}$};
\draw(1.2,0.1) node{$\vec{e}_{2}$};
\draw (-0.8,-0.1) node{$\vec{e}_{1}$};
\end{tikzpicture}
\end{center}

Any point $\bp$ of the simplex being a probability $P$ such that $P[\{\omega_{j}\}]=p^{j}$, is in bijection with the  point $\vec p$ of the space $(\mathbb{R}^{3}, (0,0,0))$ and verifying~\eqref{E:vectp}.

\smallskip 

A point $\bp$ lies on the simplex face with vertices $\mathbf{e}_{(j_{1})},\dots,\mathbf{e}_{(j_{k})}$, if and only if $P[\{\omega_{j(1)}\}]+\dots+P[\{\omega_{j(k)}\}]=1$. Thus, the face of the simplex forms the collection $\mathrm{Cap}(\Omega',\A')$ of probability distributions on the finite algebra $\A'$, where $\Omega'=\Omega-\cup_{i=1}^{k}\{\omega_{j(i)}\}$ and $\A'$ the algebra of subsets of $\Omega'$.

This construction is reminiscent of problems in planar geometry. In particular, the Ceva and Menelaus theorems.  
The Cevian (Ceva line) is a line starting from a vertex and connecting the opposite face. The following construction allows a description in terms of a continuous one parameter groups. Using this construction one can introduce the notion of derivation on the simplex.

\subsection{An invariant basis, for the $n$-dimensional case}
The construction above works for any dimension.
Let us introduce the following families of vectors.

\begin{itemize}
\item The family $\{\vec y_{k}(\bp)\}_{k=1}^{n}$ of non independent ``vectors'' in $\mathbb{R}^{n}$ starting at the point  $\bp$ and ending at vertex $\be_{k}$. This family of vectors lie on the simplex:
\begin{equation}\label{E:Y1}
\vec y_{k}(\bp)= \vec e_{k}-\vec p,\ k=1,\dots,n
\end{equation}
\smallskip 
\item The family $\{\vec z_{k}(\bp)\}_{k=1}^n$ of non independent ``vectors'' in $\mathbb{R}^{n}$. This starts at the point $\bq_k$, indexed by the vector $\vec q_{k}$, where the Cevian intersected the $k$-face,  opposite to the vertex from which is is issued. These vectors lie on the simplex:  \begin{equation}\label{E:Z1}
 \vec z_{k}=(1-p^{k})^{-1}\vec y_{k}(\bp)=\vec e_{k}- \vec q_{k},
\end{equation}
where the vector 
\begin{equation} \vec{q}_{k}=\sum_{i\ne k}q_{k}^{i}\vec{e}_{i},\text{ with } q_{k}^{i}= \frac{p^{i}}{1-p^{k}},
\end{equation}
corresponds to a conditional probability distribution $P[\ \cdot \mid \Omega\setminus \{\omega_{k}\}]$, lying on the $k$-th face of the simplex (for the 2-dimensional case: the edge).
\end{itemize}
\begin{rem}This construction holds naturally for $n\geq 3$.
\end{rem}
From the definition~\eqref{E:Y1} of  $\mathbf{y}_{k}(\mathbf{p})$ we can as well, introduce the  following generating systems of vector fields on the simplex:
 \begin{equation} \label{E:X1}
\vec x_{k}(\bp)=p^{k}\vec y_k.
\end{equation}

This generating system verifies the condition
\begin{equation} \label{E:X2}
\sum_{k=1}^{n}\vec x_{k}(\bp)=0.
\end{equation}

A point $\mathbf{p}$ in the interior of the simplex can be written as:
 \begin{equation}\begin{aligned}
 \vec p &= p^{k}\vec e_{k}+ (1-p^{k})\sum_{i\ne k}q^{i}_{k}\vec e_{i}, \text{ with } 0<p^{k}<1,\\
 &=p^{k}\vec e_{k}+(1-p^{k})\vec q_{k},
 \end{aligned}
 \end{equation}
and this  for each choice of the vertex $\vec e_{k}$.

\begin{center}
\begin{tikzpicture}[scale=1.5]
 \draw(0.1,0) node{$\mathbf{0}$};
 \draw [black,dashed] (0, 0) -- (0,3);
 \draw(0,3.1) node{$\mathbf{e}_{3}$};
\draw[black,dashed]  (0, 0) -- (3.5,-1.5);
\draw(3.5,-1.5) node{$\mathbf{e}_{2}$};
\draw [black,dashed] (0, 0) -- (-3.5,-0.5);
\draw(-3.5,-0.5) node{$\mathbf{e}_{1}$};
\draw [black] (0, 3) -- (-3.3,-0.46);
\draw [black] (0, 3) -- (3.3,-1.45);
\draw [black] (3.3, -1.45) -- (-3.3,-0.46);
\draw[black]  (0, 3) -- (-1.45,-0.75);
\draw[black,dashed]  (0, 0) -- (-1.5,-0.75);
\draw(-1.5,-0.85) node{$\mathbf{q}_{3}$};
\draw[black,dashed]  (0, 1.6) -- (-0.7,1.2);
\draw[black,very thick,dashed,->]  (0, 0) -- (-0.7,1.2);
\draw(-0.8,1.2) node{$\mathbf{p}$};
\draw(0.15,0.9) node{$p^{3}$};
\draw(0.2,2.3) node{$1-p^{3}$};
\draw(-0.6,0.6) node{$\vec p$};
\draw[black,very thick,->]  (-0.7,1.2) -- (-0.31,2.2);
\draw(-0.8,1.7) node{$\vec x_{3}(p)$};
\end{tikzpicture}
\end{center}

Then the independent vectors $ \vec x_{1}(\bp),\vec x_{2}(\bp),\dots, \vec x_{n}(\bp)$ define an invariant generating system under the permutation of the vertices. Moreover, it allows to define a barycentric coordinate system.
\begin{rem}
Note that this barycentric coordinate system is just the coordinate system induced by the three Ceva lines intersecting at $\bp$.
\end{rem}

\subsection{Change of coordinates}
In the case of a manifold $S$ of multinomial distributions we can use  as coordinate systems $\theta=(\theta^1,\dots,\theta^n)$ such that:  

\[\theta^1=p_1,\dots, \theta^n=p_n,\theta^{n+1}=p_{n+1}\]
and satisfying the condition \[\theta^1+\dots+\theta^{n+1}=1.\] 

As for the tangent space, it has $n$ generators $\partial_1,...,\partial_n$ where $\partial_il(x,\theta)=\frac{\delta_{x_i}}{\theta^i}-\frac{\delta_{x_{n-1}}}{\theta^{n+1}}$. 
 
In the $\theta^{k}=p^{k}, \ k=0, \dots, n$ coordinate system this space can be realized as an open simplex whose the vertices are given by $\mathbf{e}_{k}=\underbrace{(0,\dots,0,\underset{k}1,0,\dots,0)}_{d+1}$.
An interesting remark concerns the change of coordinate system. Let us define $(\eta^i)_{i=1}^{n+1}$ by \[\eta^1=2\sqrt{p_1},\dots, \eta^{n+1}=2\sqrt{p_{n+1}},\]
so that \[\sum_{i=1}^{n+1} (\eta^i)^2=4\] which is exactly the equation of a sphere of radius 2. This interesting change of coordinates presents the manifold $S$ as a part of the $n$-dimensional sphere with radius 2, embedded in the $(n+1)$-Euclidean space with coordinate system $(\eta^1,...,\eta^n)$.

\subsection{Parallelizable webs for the case of  multinomial distribution}
A concrete geometric discussion of the application of our previous statements for the case of the multinomial distribution is presented. Consider the statistical manifold corresponding to an $n-$simplex. Since we consider only those $(\theta)_{i=1}^n$ such that $\sum_{i=1}^n \theta_i=1$, we are thus investigating $n$ webs on a given $(n-1)$-face of the simplex. 

\smallskip
We give a geometric proof of parallelizable webs for the pre-Frobenius statistical model, corresponding to the case of multinomial distribution. This construction relies on the Cevians.
\begin{prop}
Let us consider the $n$-dimensional pre-Frobenius statistical model, corresponding to the case of multinomial distribution. Then, the $(n+1)-$web, i.e. given by  Cevians form a parallelizable web.
\end{prop}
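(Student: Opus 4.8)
The plan is to produce an explicit chart in which the Cevian $(n+1)$-web becomes a genuine parallel web, and then to invoke the definition of parallelizability recalled above. Recall first that the integral curves of the fields $\vec x_k=p^k\vec y_k$ of \eqref{E:X1} are exactly the Cevians issued from the vertex $\be_k$, so the $k$-th foliation $\cF_k$ of the web $W$ is the family of those Cevians and the leaves are one-dimensional, as the phrase ``given by Cevians'' requires. I would coordinatise the open simplex $\{\mathbf p:\ \sum_{i=1}^{n+1}p^i=1,\ p^i>0\}$ by the logarithmic ratios $\xi^i=\log\!\big(p^i/p^{n+1}\big)$, $i=1,\dots,n$, which form a diffeomorphism of the interior onto $\R^n$. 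Passing to ratios is precisely what makes the normalisation $\sum_i p^i=1$ disappear, so that no curved constraint survives in the chart.

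Next I would read off each Cevian foliation in this chart. By the construction of $\mathbf q_k$ in \eqref{E:Z1}, the Cevian through $\be_k$ is the locus along which the conditional distribution $\mathbf q_k$, with entries $q_k^i=p^i/(1-p^k)$, is constant; equivalently, all ratios $p^i/p^j$ with $i,j\neq k$ are frozen. Translating into the $\xi$-coordinates, for $k\leq n$ this freezes every $\xi^i$ with $i\neq k$ and leaves only $\xi^k$ free, so $\cF_k$ consists of the straight lines parallel to the axis $\be_k$; for $k=n+1$ it freezes every difference $\xi^i-\xi^j$ and lets the $\xi^i$ slide together, so $\cF_{n+1}$ consists of the straight lines parallel to the diagonal direction $(1,\dots,1)$. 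Thus in the single chart $(\xi^1,\dots,\xi^n)$ the web $W$ is literally $n+1$ families of parallel lines, in the directions $\be_1,\dots,\be_n$ and $(1,\dots,1)$.

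It then remains to verify general position and conclude. Any $n$ of the $n+1$ directions $\{\be_1,\dots,\be_n,(1,\dots,1)\}$ are linearly independent, since replacing one $\be_j$ by $(1,\dots,1)=\sum_i\be_i$ still spans $\R^n$; hence the $n+1$ foliations are everywhere in general position and form a genuine $(n+1)$-web. As this is exactly a parallel web, and as a web equivalent to a parallel web is by definition parallelizable, the Cevian web $W$ is parallelizable. For $n=2$ the chart returns the opening example $\xi^1=\text{const}$, $\xi^2=\text{const}$, $\xi^1+\xi^2=\text{const}$, where the conclusion also follows from Theorem~\ref{T:Ash}; this is consistent with the abstract assertion of Proposition~\ref{T:para}.

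The step I expect to be the main obstacle is the second one: correctly identifying, for each vertex, \emph{which} log-ratios are frozen and which one remains free, so that the direction of $\cF_k$ is pinned down unambiguously. The asymmetry between the indices $k\leq n$ (coordinate axes) and $k=n+1$ (the diagonal) is an artefact of singling out $p^{n+1}$ in the chart, so one should check that permuting the distinguished vertex leaves the web unchanged; this is precisely the invariance of the generating system $\vec x_1,\dots,\vec x_n$ under permutation of the vertices noted after \eqref{E:X2}.
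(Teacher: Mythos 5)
Your proposal is correct, but it proves the proposition by a genuinely different route than the paper. The paper's proof is a geometric induction on dimension built on classical incidence geometry: it invokes the Ceva--Menelaus relation \eqref{E:Ceva1} in the triangle (the 2-face of the 3-simplex), uses the fact that the relation is satisfied both by concurrent and by parallel Cevian triples to ``draw those Cevians as parallel lines,'' then passes to the generalised Ceva theorem for the hyperplanes $A_{i_1}\dots A_{i_{n-1}}B_{km}$ (relation \eqref{E:Ceva}) and inducts to get a parallelizable $(n+1)$-web on each face. You instead exhibit a single explicit parallelizing chart: the log-ratio coordinates $\xi^i=\log(p^i/p^{n+1})$ straighten all $n+1$ Cevian pencils simultaneously into parallel families with directions $\be_1,\dots,\be_n,(1,\dots,1)$, you verify general position, and you conclude directly from the definition of parallelizability. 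Your argument is in some respects tighter than the paper's sketch: it works with the actual foliations (the full pencil of Cevians through each vertex) rather than with one distinguished parallel triple, it needs no induction, and it produces the equivalence diffeomorphism concretely --- note moreover that your $\xi^i$ are exactly the canonical parameters of the multinomial exponential family, so your chart is the flat exponential chart, which is a pleasant conceptual bonus the paper's proof does not make visible. What the paper's route buys in exchange is the link to the classical Ceva configuration and its hyperplane generalisation, which is the version of the web used later in the text. One small slip in your consistency check: for $n=2$ the third family is $\xi^1-\xi^2=\mathrm{const}$ (lines parallel to $(1,1)$), not $\xi^1+\xi^2=\mathrm{const}$; this is harmless, as the two webs are equivalent under $\xi^2\mapsto-\xi^2$, and the comparison with Theorem~\ref{T:Ash} and Proposition~\ref{T:para} stands.
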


The proof is by induction, presented in the geometric construction below.

\subsection*{Proof}
Let ${\bf e(1), e(2),... ,e(n+1)}$ be the vertices of the simplex. Since we consider only the $(n-1)$-face corresponding to $\sum_{i=1}^n \theta_i=1$, we have $n$ vertices. We construct the $n$-web using the Cevians (i.e. lines that connect a given vertex to its opposite face). 

\subsection*{Low dimensional case}
Let us first focus on the low dimensional case, i.e. the 3-simplex. For simplicity, let us call the vertices of the 2-face (triangle) we are interested in $A,B,C$. The Ceva relation (given by the Ceva and Menelaus theorem) states that we have the following relation:

\begin{equation}\label{E:Ceva1}\frac{A'B}{A'C}\times\frac{B'C}{B'A}\times\frac{C'A}{C'B}=-1.\end{equation} 
This relations holds for a triple of lines $(AA'), (BB')$ and $(CC')$ which are either concurrent in one point $k$ or parallel.

\smallskip 

In the first case, one draws lines from each vertex to its opposite edge, being concurrent at one point $k$ and satisfying the relation~\ref{E:Ceva1}. The second, is obtained by drawing three parallel lines, where each line contains one unique vertex of the triangle and the relation~\ref{E:Ceva1} is satisfied. Following the definition of a parallelizable 3-web and restricting our attention to the case where the manifold is $\alpha$-flat (that means Frobenius) we can state the following.  
 
\begin{lem}
Consider the 2-dimensional statistical pre-Frobenius manifold, given by the multinomial distribution.
Then, the three-web can be given by the Cevians and this web is parallelizable.  
\end{lem}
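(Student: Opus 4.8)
The plan is to produce an explicit flat chart in which the three families of Cevians become the three families of parallel lines of the introductory example $\{x=\mathrm{const},\,y=\mathrm{const},\,x+y=\mathrm{const}\}$, so that parallelizability becomes visible by inspection. First I would record the barycentric description of the three foliations. Writing the interior point as $(p^1,p^2,p^3)$ with $p^i>0$ and $\sum_i p^i=1$, a line through the vertex $\mathbf{e}_k$ is cut out by a linear equation involving only the two remaining coordinates, so along a Cevian issuing from $\mathbf{e}_1$, $\mathbf{e}_2$, $\mathbf{e}_3$ the respective ratios $r_1=p^2/p^3$, $r_2=p^1/p^3$, $r_3=p^1/p^2$ are constant. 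The three foliations are exactly the level sets of $r_1,r_2,r_3$; they are not independent, since $r_3=r_2/r_1$, and this single multiplicative relation is the web-theoretic form of the Ceva concurrence relation \eqref{E:Ceva1} (the three leaves through an interior point meet, by construction, at that point).

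Next I would pass to the natural (canonical) parameters of the multinomial exponential family, namely the logarithmic coordinates $u_1=\log(p^1/p^3)$ and $u_2=\log(p^2/p^3)$ on the open simplex. Because the manifold is assumed $\alpha$-flat --- which is exactly the Frobenius hypothesis --- and since, as recorded just after \eqref{E:4}, the $\alpha$-connection of an exponential family is flat for $\alpha=\pm1$, the $e$-connection is flat and the $u_a$ are genuine affine-flat coordinates for it: the structure connection has vanishing Christoffel symbols in the $u_a$, and its geodesics are the affine lines. On the interior of the triangle the assignment $(p^1,p^2,p^3)\mapsto(u_1,u_2)$ is a diffeomorphism, hence a legitimate local equivalence of webs.

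In these coordinates the foliations linearise, for $r_1=e^{u_2}$, $r_2=e^{u_1}$ and $r_3=e^{u_1-u_2}$, so the three Cevian foliations are the level sets $\{u_1=\mathrm{const}\}$, $\{u_2=\mathrm{const}\}$ and $\{u_1-u_2=\mathrm{const}\}$. Their defining linear forms $u_1,u_2,u_1-u_2$ are pairwise independent but span only a two-dimensional space, the third being the difference of the first two; this is precisely the multiplicative Ceva relation of Step 1 read additively. Putting $x=u_1$, $y=-u_2$ identifies the web with $\{x=\mathrm{const},\,y=\mathrm{const},\,x+y=\mathrm{const}\}$, each of whose three families consists of parallel straight lines. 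Such a configuration is by definition a parallel three-web, whence the Cevian web is parallelizable, as asserted.

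I expect the one genuinely substantive point to be the compatibility claim in the second paragraph: that the flat affine structure furnished by the $\alpha$-flatness (Frobenius) hypothesis is the very structure in which the Cevians straighten into parallel lines, equivalently that the canonical coordinates $u_a$ are $\nabla$-flat. This is where the exponential-family form and the flatness of the $\pm1$-connections are indispensable --- without a flat chart the three pencils of Cevians could not be straightened simultaneously, and the reduction to the parallel model would break down. The barycentric identification of the Cevians and the subsequent linear algebra are routine; alternatively, one could combine the already-established hexagonality with the classical fact that a planar three-web is parallelizable as soon as it is hexagonal, but the direct linearisation above is self-contained and makes the Frobenius hypothesis do exactly the work it is needed for.
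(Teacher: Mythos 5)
Your proposal is correct, and it is genuinely different from the paper's argument. The paper's proof is a two-line geometric appeal: it notes that the Cevians on the 2-face form a three-web and then invokes Ceva's theorem --- specifically the alternative in which the relation \eqref{E:Ceva1} is realized by three \emph{parallel} lines --- to conclude parallelizability. As written, that step is elliptic: the Cevians through a fixed vertex are concurrent, not parallel, and what must actually be produced is a local diffeomorphism carrying the three concurrent pencils onto a parallel web. Your proof supplies exactly this missing equivalence explicitly: the observation that the three foliations are the level sets of $p^2/p^3$, $p^1/p^3$, $p^1/p^2$, followed by the chart $u_1=\log(p^1/p^3)$, $u_2=\log(p^2/p^3)$, linearizes the web to $\{u_1=\mathrm{const}\}$, $\{u_2=\mathrm{const}\}$, $\{u_1-u_2=\mathrm{const}\}$, which matches the paper's definition of a parallelizable web verbatim. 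What your route buys is rigor and a conceptually satisfying identification: the straightening chart is precisely the canonical-parameter chart of the multinomial exponential family, i.e.\ the $e$-flat affine coordinates, so the web-geometric and information-geometric flat structures coincide. What the paper's route buys is brevity and a form that iterates: its generalized Ceva theorem drives the inductive construction of parallelizable $(n+1)$-webs in higher dimensions, whereas your computation is tailored to the planar case (though it generalizes with the ratios $p^i/p^{n+1}$). One small correction: you present the Frobenius ($\alpha$-flatness) hypothesis as indispensable, but your own computation shows it is not --- the logarithmic change of coordinates is an unconditional diffeomorphism of the open simplex, and web equivalence requires nothing more; the flatness of the $\pm 1$-connections explains \emph{why} these coordinates are natural, but the parallelizability of the Cevian web holds as a statement of pure web geometry.
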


\begin{proof}
It is easy to verify that the construction above of Cevians on the 2-face of a 3-simplex corresponds to the definition of a three web. Applying the theorem of Ceva, we can draw those Cevians as parallel lines.  This happens to be exactly the definition of a parallelizable web.    
\end{proof}

\subsection*{Generalisation to higher dimensions}
We generalise this statement to higher dimensions. We adopt this construction for the $n$-dimensional case i.e. for the case of an $n$-simplex. 
Let $A_1, \dots , A_{n+1}$ be the vertices of an $n$-simplex. Let $B_{ij}$ be the points lying on 1-dimensional edges $A_iA_j$ of the $n$-simplex, which do not coincide with the vertices of the simplex, such that $1\leq i\leq n$ and $i+1 \leq j\leq n+1$. 

\smallskip

The theorem is given as follows. 
\begin{thm}[Generalised Ceva] 

Let us consider the hyperplanes

 $A_{i_{1}}\dots A_{i_{n-1}}B_{km}$, where the indexes are ordered as follows $i_1 <...<i_{n-1}$, $k<m$ and $i_j\neq k,m$ lie in the set $\{1,...,n+1\}$. Then, these hyperplanes  have a common point if and only if the following equalities are fulfilled:

\begin{equation}\label{E:Ceva}\frac{A_iB_{ij}}{B_{ij}A_j}\times\frac{A_jB_{jk}}{B_{jk}A_k}\times\frac{A_kB_{ik}}{B_{ik}A_i}=1\end{equation}
for $i=1,...,n-1,$ $j=2,...,n,$, $k=3,...,n+1$, and $i<j<k.$\end{thm}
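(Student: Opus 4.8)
The plan is to translate the concurrency question into linear algebra via barycentric coordinates on the $n$-simplex, thereby reducing the generalised Ceva identity to a multiplicative cocycle condition on the complete graph of its vertices. First I would fix homogeneous barycentric coordinates $[x_1 : \cdots : x_{n+1}]$ with respect to $A_1,\dots,A_{n+1}$, working in the projective completion so that concurrency ``at infinity'' — the parallel case already singled out for \eqref{E:Ceva1} — is handled uniformly. Writing an edge point as $B_{km} = \beta_{km} A_k + \gamma_{km} A_m$ with $\beta_{km}+\gamma_{km}=1$, the directed ratio along the edge is $r_{km} := \frac{A_k B_{km}}{B_{km} A_m} = \gamma_{km}/\beta_{km}$.

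Next I would compute the equation of the cevian hyperplane $H_{km}=A_{i_1}\cdots A_{i_{n-1}}B_{km}$. Since it contains every vertex $A_i$ with $i\neq k,m$, its linear equation $\sum_i c_i x_i = 0$ must have $c_i=0$ for all such $i$; hence $H_{km}$ takes the form $c_k x_k + c_m x_m = 0$, and passage through $B_{km}$ forces $c_k\beta_{km}+c_m\gamma_{km}=0$. This yields the clean description
\[
H_{km}:\quad \frac{x_m}{x_k}=r_{km}.
\]
Thus a point $P=[x_1:\cdots:x_{n+1}]$ is common to all hyperplanes $\{H_{km}\}_{k<m}$ if and only if its coordinates realise every prescribed edge ratio simultaneously.

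The heart of the argument is then that such a point exists precisely when the family $\{r_{km}\}$ is a multiplicative cocycle, a condition governed entirely by the triangular relations. For the forward implication, a common point immediately gives $r_{ij}r_{jk}=\frac{x_j}{x_i}\cdot\frac{x_k}{x_j}=\frac{x_k}{x_i}=r_{ik}$ for every triple $i<j<k$; since the third factor in \eqref{E:Ceva} equals $\frac{A_k B_{ik}}{B_{ik} A_i}=1/r_{ik}$, this is exactly identity \eqref{E:Ceva}. For the converse, assuming \eqref{E:Ceva} for all triples, I would set $x_1=1$ and $x_m=r_{1m}$ for $m\geq 2$; the triangular relation on $\{1,k,m\}$ gives $r_{1k}r_{km}=r_{1m}$, so that $x_m/x_k=r_{1m}/r_{1k}=r_{km}$ and $P=[x_1:\cdots:x_{n+1}]$ lies on every $H_{km}$.

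The main obstacle, and really the only conceptual point beyond routine coordinate bookkeeping, is recognising that the full system of $\binom{n+1}{2}$ hyperplane conditions collapses to the $\binom{n+1}{3}$ triangular identities, i.e. that the cycle space of the complete graph on the vertices is generated by its $3$-cycles. The constructive converse above sidesteps any independence count by invoking only the triangles through the distinguished vertex $A_1$; the remaining triangular identities follow automatically (for $i<j<k$ all distinct from $1$, dividing the relations on $\{1,i,j\}$ and $\{1,j,k\}$ recovers $r_{ij}r_{jk}=r_{ik}$), which is exactly why it is legitimate to state \eqref{E:Ceva} over the entire range $i<j<k$. A minor technical caveat I would flag explicitly is the orientation convention: the directed barycentric ratios produce the product $1$ as in \eqref{E:Ceva}, the discrepancy with the $-1$ of the planar form \eqref{E:Ceva1} being precisely the choice between edge-orientation and triangle-orientation sign conventions.
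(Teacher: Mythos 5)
Your proof is correct, and it is worth noting at the outset that the paper itself offers no proof of this theorem: the Generalised Ceva statement is imported as a known result and immediately put to use (via its parallel-cevians case) in the inductive construction of parallelizable webs, so there is no in-paper argument to compare against. Your barycentric route is the standard efficient one, and all the key steps check out: containing the $n-1$ opposite vertices does kill every coefficient of the cevian hyperplane except $c_k,c_m$, giving the clean form $x_m/x_k=r_{km}$; the third factor of \eqref{E:Ceva} is indeed $1/r_{ik}$, so the triangular identities are exactly the multiplicative cocycle condition $r_{ij}r_{jk}=r_{ik}$; and your observation that the triangles through $A_1$ generate the rest (by dividing the relations on $\{1,i,j\}$ and $\{1,j,k\}$) correctly justifies stating \eqref{E:Ceva} over the full range $i<j<k$. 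Two small points deserve to be made explicit. First, in the forward direction you divide by coordinates of the common point, so you should record that any point lying on all $\binom{n+1}{2}$ hyperplanes automatically has every $x_i\neq 0$: since $B_{km}$ is not a vertex, $\beta_{km},\gamma_{km}\neq 0$, hence both coefficients of each equation $c_kx_k+c_mx_m=0$ are nonzero, and $x_k=0$ would propagate to $x_m=0$ for all $m$, annihilating the point. Second, your projective completion is not merely cosmetic but necessary for the stated biconditional: the constructed point $[1:r_{12}:\cdots:r_{1,n+1}]$ may satisfy $\sum_m x_m=0$, in which case the hyperplanes meet only at infinity, i.e.\ are parallel --- precisely the degenerate alternative already present in the planar statement \eqref{E:Ceva1} and, not incidentally, exactly the version the paper exploits right after the theorem to draw parallel cevians, so your framing is faithful to the paper's intended use. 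Your remark on the $+1$ versus $-1$ discrepancy between \eqref{E:Ceva} and \eqref{E:Ceva1} as a pure orientation-convention artifact is likewise accurate.
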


Any hyperplanes verifying Ceva's relation have a common point. However, we can use the version where Cevians are parallel, thus creating a parallelizable $n$-web for the manifold corresponding to the interior of this $n$-dimensional face. 

By induction, we can construct a parallelizable $(n+1)$-web for this $n+1$-dimensional face, based on the previous step. 

Therefore, we have a new proof concerning the parallelization of webs (i.e Prop.~\ref{T:para}) via the geometric construction above, relying on Cevians.

\subsection{Webs for manifolds over algebras}
In this section we consider manifolds over real (finite dimensional, unital) algebras. The theory of generalised Cauchy--Riemann equations was developed by Scheffers~\cite{Sch93} and allows a rigorous definition of analytic manifolds over finite dimensional algebras. Recollections on the construction of these manifolds (which relies on real realisations of modules over those algebra) are omitted. However, we refer to~\cite{CoMa20} for a full exposition on this topic.

\subsubsection{}

Consider a rank $r$ algebra $\fA_r=\{\be_1,\cdots, \be_r\}$ over the real numbers, where the multiplication is given by $\be_i\cdot\be_j=c_{ij}^k\be_k$. It is a unital algebra equipped with the commutativity and associativity conditions, given respectively by the formulas:  
\begin{equation}\label{eq:a}
c^k_{ij}=c^k_{ji}, \quad
c_{ik}^sc_{sk}^m = c_{ik}^s c_{sj}^m.
 \end{equation} 
Any element $\bA$ of the algebra $\fA_{r}$ is given by the linear combination \[\bA=A^i\be_i,\] where $A^i$ is a (real) number and $\be_i$ are generators of $ \fA_r$ and the principal unit $\epsilon$ is given by $\epsilon^i\be_i$.

Consider a manifold over the algebra $ \fA_r$. Denote it $\frak{M}( \fA_r)$. We shortly recall the key step to its construction: the generalized Cauchy--Riemann equations, allowing a rigorous definition of manifolds defined over such algebras. Finally, we discuss the existence of 3-webs on these manifolds.

\smallskip 

Let $y=f(x)$ be a (analytic) function, whose domain and range belong to a commutative algebra (i.e. $C_{jk}^h=C_{kj}^h$). We put $x=\sum_ix_ie_i,$ $y=\sum_iy_ie_i.$
From the generalized Cauchy--Riemann we have the following:
\[\sum_h\frac{\partial y_i}{\partial x_h}C_{jk}^h=\sum_h\frac{\partial y_h}{\partial x_i}C_{hk}^j,\]
where $C_{jk}^h$ are the constant structures. 
 
 \smallskip 
 
The existence of these functions allow us to define a {\it differentiable $n$-manifold} over the algebra $ \fA_r$. Note that in local coordinates, if the rank of the algebra $ \fA_r$ is $r$ (i.e. $rk( \fA_r)=r$), then the local coordinates point $p$ on the manifold $\frak{M}( \fA_r)$ are given by an $nr$-tuple of elements in this algebra: 
\[(\, x_{1}\be_1,\cdots , x_{n}\be_1,\cdots, x_{1}\be_r,\cdots , x_{n}\be_r)\,,\]
where $\{ \be_i\}_{i=1}^r$ are generators of $ \fA_r$ and $(x_i)_{i=1}^n$ are the coordinates of the manifold $\fM$.

For instance a two-dimensional manifold $\frak{M}$ over a rank $r$ algebra $ \fA_r$ (commutative, associative, unital) admits a real realization in the form of a real $(2r)$-dimensional manifold $M$. The admissible coordinate transformations are determined by functions satisfying generalized Cauchy--Riemann equations (i.e. compatible with the algebra $\fA_r$).

\begin{rem}
Our investigations will rely on the example cited above. Indeed, from the works~\cite{CoCoNen3} it follows that the statistical Frobenius manifold can be considered as a a projective manifold defined over  a (split) real algebra of rank 2, generated by a pair of idempotents. 
\end{rem}

\smallskip 
\subsubsection{}
Three webs can naturally be defined on these types of manifolds. Let $\cD$ be an open subspace defined on a 2-dimensional manifold defined over $  \fA_r$. Given a differentiable function $f:\cD\to   \fA_r$ (with values in $  \fA_r$ and defined in a domain $\cD$), the foliations of the web can be defined by the following type of equations:

\[x=a,\, y=b, \quad  f(x,y)=c, \quad a,b,c\in   \fA_r, \]

where we suppose that the partial derivatives of $f(x,y)$ are non-nul (and that they are not zero in the algebra $ \fA_r$).
The structure equations for a three web on $\frak{M}( \fA_r)$ has the same form as the structure equations for any two dimensional three web in the real plane. 

This leads to the following conclusion.
\begin{lem}
Let $  \fA_r$ be an algebra of rank $r$. Consider a manifold of dimension 2 on the rank $r$ algebra $  \fA_r$. Let $W_{\frak{M}}$  be a $d$-web on $\frak{M}(  \fA_r)$. Then, the foliations $\lambda_\alpha^{\frak{M}}$ where $\alpha =1,\cdots, d$ are of dimension $r$.\end{lem}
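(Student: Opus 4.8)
The plan is to pass to the real realization of $\fM(\fA_r)$ and carry out a dimension count, showing that each algebra-valued defining equation of a foliation cuts out exactly $r$ independent real conditions, so that every leaf acquires real dimension $2r-r=r$. First I would fix the real realization: by the discussion preceding the statement, a $2$-dimensional manifold over the rank-$r$ algebra $\fA_r$ has a real realization as a real manifold $M$ of dimension $2r$. Concretely, writing the two $\fA_r$-valued coordinate slots as $x=\sum_{i=1}^r x^{(i)}\be_i$ and $y=\sum_{i=1}^r y^{(i)}\be_i$ with $x^{(i)},y^{(i)}\in\R$, the $2r$ reals $(x^{(1)},\dots,x^{(r)},y^{(1)},\dots,y^{(r)})$ serve as local coordinates on $M$.

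Next I would expand each defining equation. Every foliation $\lambda_\alpha^{\fM}$ of the web $W_{\fM}$ is cut out by a single $\fA_r$-valued equation of the form $x=a$, $y=b$, or $f(x,y)=c$ with $a,b,c\in\fA_r$; expanding in the basis $\{\be_i\}_{i=1}^r$ turns such an equation into a system of exactly $r$ scalar equations. For instance $x=a$ becomes $x^{(i)}=a^{(i)}$ for $i=1,\dots,r$, while $f(x,y)=c$ becomes $f^{(i)}=c^{(i)}$ for $i=1,\dots,r$, where $f=\sum_i f^{(i)}\be_i$ is the component decomposition of $f$. Thus each leaf is the common zero locus of $r$ real functions on the $2r$-dimensional manifold $M$.

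The only place where genuine work is required is to verify that these $r$ equations are independent, i.e. that each leaf is a regular submanifold of codimension $r$. For the coordinate foliations $x=a$ and $y=b$ this is immediate, since the Jacobian of $(x^{(i)})_i$ (respectively $(y^{(i)})_i$) with respect to the real coordinates is an identity block of rank $r$. For the foliation $f(x,y)=c$ I would invoke the generalized Cauchy--Riemann equations recalled above: applied in each algebra slot, they force the differential of $f$ to be $\fA_r$-linear, so that $df$ is realized by the left-multiplication operators $L_{\partial f/\partial x}$ and $L_{\partial f/\partial y}$, each an $r\times r$ real matrix assembled from the structure constants of $\fA_r$. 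The hypothesis that the partial derivatives of $f$ are non-null in $\fA_r$ then makes at least one of these operators invertible, whence the real differential $\bigl[\,L_{\partial f/\partial x}\mid L_{\partial f/\partial y}\,\bigr]$ of the map $(x,y)\mapsto f(x,y)$ has rank $r$; by the implicit function theorem the level set is a smooth submanifold of codimension $r$, hence of dimension $r$.

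Combining the three cases, every foliation $\lambda_\alpha^{\fM}$, $\alpha=1,\dots,d$, has leaves of real dimension $2r-r=r$, which is the assertion, and the count is uniform across the foliations (consistent with the remark that the structure equations on $\fM(\fA_r)$ have the same form as in the planar case). I expect the main obstacle to be precisely this regularity step for $f(x,y)=c$, namely making rigorous the passage from ``non-null $\fA_r$-valued partial derivative'' to ``full-rank real Jacobian''. Since $\fA_r$ may contain zero divisors---as in the idempotent-generated split algebra of the Remark---``non-null'' must in general be read as ``not a zero divisor'' (equivalently, $L_{\partial f/\partial x}$ or $L_{\partial f/\partial y}$ invertible) for the argument to go through, and I would use the commutativity relation $c_{ij}^k=c_{ji}^k$ together with associativity to control the matrix of the relevant multiplication operator.
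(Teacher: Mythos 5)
Your proof is correct, but it takes a genuinely different route from the paper's. The paper's proof is a one-line realization argument: since $\fM$ has dimension $2$ over $\fA_r$, its real realization has dimension $2r$, and each foliation $\lambda_\alpha^{\fM}$, being one-dimensional over the algebra, realizes as the $r$-tuple $\lambda_\alpha=(\lambda_\alpha^{\fM}\be_1,\dots,\lambda_\alpha^{\fM}\be_r)$ --- in effect, real dimension equals dimension over $\fA_r$ times the rank $r$, with the regularity of the leaves taken for granted. You instead run a codimension count: each $\fA_r$-valued defining equation expands into $r$ scalar equations, and you verify via the generalized Cauchy--Riemann equations that the real Jacobian $\bigl[\,L_{\partial f/\partial x}\mid L_{\partial f/\partial y}\,\bigr]$ has rank $r$, so the implicit function theorem yields leaves of dimension $2r-r=r$. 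What your route buys is precisely the step the paper elides: the rank verification, and with it the correct observation that in algebras with zero divisors (such as the split rank-$2$ algebra used later in the paper, where $L_{\be_+}$ has rank $1$, so that e.g.\ $f(x,y)=\be_+x+\be_+y$ has nonzero partials but a rank-$1$ level set map) ``non-null'' must be read as ``non-zero-divisor'', equivalently invertible since $\fA_r$ is finite-dimensional, commutative and unital --- this is the intended general-position hypothesis on the web. What the paper's route buys is brevity and uniformity: the tuple description applies verbatim to every foliation of the $d$-web without case analysis on the defining equations, and it is the same realization viewpoint reused in the subsequent subweb proposition via $\lambda_\alpha=(\lambda_\alpha^{\fM}\be_+,\lambda_\alpha^{\fM}\be_-)$.
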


\begin{proof}
By hypothesis the manifold $\fM$ is of dimension 2. So, the realisation of the manifold over the algebra $\fA_r$ gives a real manifold of dimension $2r$.  Realisations of the web foliations are given, for any $\alpha=1,\cdots, d$ by the $r$-tuple $\lambda_\alpha=(\lambda_\alpha^{\frak{M}}\be_1, \cdots, \lambda_\alpha^{\frak{M}}\be_r)$, where 
 the $\lambda_\alpha^{\frak{M}}$ are one-dimensional foliations coming from the web $W_{\frak{M}}$ on $\frak{M}$.\end{proof}

Following~\cite{CoMa20,CoCoNen1,CoCoNen2,CoCoNen3}, the specific case under consideration implies that $\fA_2$ is an algebra of rank two, generated by a pair of orthogonal idempotents. 

It is generated by $\langle \be_1,\, \be_2 \rangle$, where $\be_i\be_j=c_{ij}^k\be_k$. 
Under a suitable change of basis, we can rewrite this pair in a canonical basis $\langle \be_+, \be_-\rangle $, where $\be_+^2=\be_+$ (resp. $\be_-^2=\be_-$) and $\be_+\be_-=\be_-\be_+=0.$ In other words this is a pair of orthogonal idempotents.

\smallskip 

Let us recall that by definition (Def. 1.11.9~\cite{Go88}), a web $W(n + 1,n,r)$ with vanishing skew-symmetric part of its torsion tensor is called {\it isoclinicly geodesic}.

By~\cite{Ti75}, let us recall the following result.
\begin{lem}\label{T:iso}
A three-web $W = (\frak{M}(  \fA_r), \lambda_\alpha)$, being a real realization of a two-dimensional three-web over $ \fA_r$ (a commutative, associative and unitary algebra), is an isoclinicly geodesic three-web. \end{lem}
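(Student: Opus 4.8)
decomposition, so the argument as planned does not cover the claim)

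The plan is to work entirely in the real realization $M$ of $\fM(\fA_r)$, a $2r$-dimensional real manifold on which the three algebra-foliations $X=\mathrm{const}$, $Y=\mathrm{const}$, $F(X,Y)=\mathrm{const}$ induce the three-web $W=W(3,2,r)$ (its leaves having dimension $r$ by the preceding lemma). By Definition 1.11.9 of \cite{Go88} it is enough to fix an adapted coframe on $M$, extract the web torsion tensor, and show that its skew-symmetric part in the two lower web indices vanishes. The whole argument hinges on one algebraic fact: since $\fA_r$ is \emph{commutative} its structure constants are symmetric, $c^k_{ij}=c^k_{ji}$, and this is precisely the symmetry that kills the skew part of the torsion once we descend to $M$.

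First I would record the structure equations over $\fA_r$. As recalled just above, the three-web on $\fM(\fA_r)$ has the same structure equations as an ordinary planar three-web; concretely there are two $\fA_r$-valued $1$-forms $\omega_{(1)},\omega_{(2)}$ cutting out the first two foliations, with $\omega_{(1)}+\omega_{(2)}$ cutting out the third, and a single $\fA_r$-valued connection form $\Theta$ for which $d\omega_{(\alpha)}=\omega_{(\alpha)}\wedge\Theta$ for $\alpha=1,2$. Commutativity already enters here: for any $\fA_r$-valued $1$-form $\omega=\sum_i\omega^i\be_i$ one has $\omega\wedge\omega=\sum_{i<j}\omega^i\wedge\omega^j(\be_i\be_j-\be_j\be_i)=0$, which is exactly what makes the diagonal terms drop when $dF$ is expanded and guarantees the planar (torsion-free) shape of the $\fA_r$-valued equations.

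Next I would pass to $M$ by expanding in the basis: $\omega_{(\alpha)}=\sum_i\omega_{(\alpha)}^i\be_i$ and $\Theta=\sum_k\Theta^k\be_k$, with $\omega_{(\alpha)}^i,\Theta^k$ genuine real $1$-forms. Inserting $\be_j\be_k=\sum_m c^m_{jk}\be_m$ into $d\omega_{(\alpha)}=\omega_{(\alpha)}\wedge\Theta$ and reading off the $\be_m$-component gives $d\omega_{(\alpha)}^m=\omega_{(\alpha)}^j\wedge\big(\sum_k c^m_{jk}\Theta^k\big)$. Setting $\omega^m_j:=\sum_k c^m_{jk}\Theta^k$ produces a real linear connection putting both equations into the canonical Chern structure equations, with the torsion term appearing as the coefficient of $\omega_{(1)}^j\wedge\omega_{(2)}^k$; by uniqueness of the Chern connection of $W(3,2,r)$ this $\omega^m_j$ is the Chern connection and the web torsion $a^m_{jk}$ is read off from the expansion. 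The decisive point is that all of the dependence of this torsion on the lower web indices $j,k$ factors through the symmetric structure constants $c^m_{jk}$; equivalently, the leading bilinear term of the coordinate loop of $W$ is $\fA_r$-bilinear, hence by commutativity symmetric, so its commutator vanishes. Therefore the skew-symmetric part $a^m_{[jk]}$ is a contraction of $c^m_{[jk]}=0$ and vanishes identically. (In fact the full torsion vanishes here, which is stronger than required.)

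I would use associativity and unitality upstream rather than in this final contraction. Associativity, i.e. $c^s_{ij}c^m_{sk}=c^s_{jk}c^m_{is}$, is what makes the generalized Cauchy--Riemann equations of \cite{Sch93,CoMa20} define a genuine class of $\fA_r$-analytic functions $F$, lets mixed algebra-derivatives commute, and lets the forms $\omega^m_j$ close up into a bona fide connection; unitality supplies the loop identity and the (generic) invertibility of $F_X,F_Y$ needed for the coframe to exist. With $a^m_{[jk]}=0$ established, Definition 1.11.9 of \cite{Go88} gives that $W$ is isoclinicly geodesic, as claimed. The step I expect to be the genuine obstacle is the invariant bookkeeping in the previous paragraph: pinning down, inside the adapted-coframe formalism, that the quantity whose skew part enters \cite{Go88}'s definition really is the contraction of $c^m_{jk}$ and is not altered by the residual gauge freedom in the coframe $(\omega_{(1)},\omega_{(2)},\Theta)$. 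Once the torsion is exhibited as such a contraction, the vanishing of its skew part is a one-line consequence of $c^m_{ij}=c^m_{ji}$, and the remaining verifications are routine given the algebra-analytic framework of \cite{Sch93,CoMa20}.
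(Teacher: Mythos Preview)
The paper does not supply its own proof of this lemma: it is introduced with ``By~\cite{Ti75}, let us recall the following result'' and then simply stated, so there is no argument in the paper to compare your proposal against.

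On its merits, your approach is the right one and is in the spirit of Timoshenko's original argument. Realizing the $\fA_r$-valued planar structure equations componentwise and observing that the induced real connection coefficients factor through the structure constants $c^m_{jk}$ is exactly how the result is obtained; commutativity $c^m_{jk}=c^m_{kj}$ then kills $a^m_{[jk]}$, which is precisely what Definition~1.11.9 of \cite{Go88} asks for. One small caution: your parenthetical claim that ``the full torsion vanishes here'' is stronger than what the computation, as written, actually shows. The forms $\omega^m_j=\sum_k c^m_{jk}\Theta^k$ satisfy the first structure equations, but the Chern connection of a $W(3,2,r)$ is fixed by a further normalization, and passing to it can introduce a symmetric residue; what is invariant under that renormalization---and all you need---is the skew part, which is indeed a contraction of $c^m_{[jk]}=0$. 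Your own hedge about the ``invariant bookkeeping'' and gauge freedom is well placed: that is where the honest work sits, and once it is done the conclusion follows exactly as you outline.
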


\subsubsection{Subwebs and ideals of $\fA_r$}

The pair of orthogonal idempotents $\be_+$ and $\be_-$ in the split rank 2 algebra $\fA_2$ generate a pair of ideals. Due to the existence of ideals in the algebra $ \fA_2$, subwebs emerge. 
 
 \begin{prop}
 Let $\fA$ be a rank two split algebra, given by a pair of orthogonal idempotents $\be_+$ and $\be_-$ generating respectively the ideals $\cI$ and $\cI^*$. Then, the statistical Frobenius manifold, corresponding to module over $\fA$, is endowed with a pair of subwebs, lying respectively in the submanifolds, corresponding to the realisations of the modules $\fM$ over the ideals $\cI$ (resp. $\cI^*$). 
 \end{prop}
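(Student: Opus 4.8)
The plan is to exploit the Peirce decomposition of the split rank two algebra $\fA$ induced by its orthogonal idempotents, and to show that this decomposition is inherited first by the module $\fM$ and then by the web foliations on its real realisation. First I would record the algebraic splitting. Since $\be_+$ and $\be_-$ are orthogonal idempotents with $\be_+\be_-=\be_-\be_+=0$ and $\be_++\be_-=\epsilon$ the principal unit, every element $\bA\in\fA$ decomposes as $\bA=\be_+\bA+\be_-\bA$, so that $\fA=\be_+\fA\oplus\be_-\fA=\cI\oplus\cI^*$ is a direct sum of the two ideals. Each ideal is itself a rank one unital algebra, with unit $\be_+$ (resp. $\be_-$) and hence isomorphic to $\R$; crucially, the two projectors $\be_+\cdot$ and $\be_-\cdot$ are algebra homomorphisms of $\fA$ onto $\cI$ and $\cI^*$, and they commute with the structure constants $c_{ij}^k$.

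Next I would transport this splitting to the manifold. By the Remark above, the statistical Frobenius manifold is a module $\fM$ over $\fA$; applying the idempotents pointwise yields $\fM=\be_+\fM\oplus\be_-\fM$, where $\be_+\fM$ is a module over the ideal $\cI$ and $\be_-\fM$ a module over $\cI^*$. Passing to real realisations via the generalized Cauchy--Riemann construction recalled in this section turns these submodules into two submanifolds, which I will call $M_+$ and $M_-$, of the real realisation $M$ of $\fM$, of the expected dimensions.

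Finally I would restrict the web. The foliations of the web $W_{\fM}$ on $\fM$ are cut out by the $\fA$-compatible equations $x=a$, $y=b$, $f(x,y)=c$ with $a,b,c\in\fA$ from the earlier lemma on three-webs over $\fA_r$. Because $\be_+\cdot$ is an algebra homomorphism onto $\cI$ that preserves the generalized Cauchy--Riemann relations, applying it to these equations produces $\cI$-compatible defining equations on $M_+$, i.e. a genuine three-web on $M_+$; the symmetric argument with $\be_-\cdot$ gives a three-web on $M_-$. These two restricted webs are precisely the asserted subwebs lying in the realisations of $\fM$ over $\cI$ and $\cI^*$.

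The hard part will be verifying that the projected foliations remain in general position on each submanifold, so that the restricted data genuinely form webs rather than degenerate configurations, and that the projected function $\be_+f$ still satisfies the generalized Cauchy--Riemann equations over the ideal $\cI$ (and likewise for $\cI^*$). Both facts follow from multiplication by an idempotent being a ring homomorphism onto the corresponding ideal, which therefore intertwines the structure constants $c_{ij}^k$ and the transversality of the foliations; carrying out this compatibility check carefully is the crux of the argument.
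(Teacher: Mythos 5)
Your proposal is correct, and it rests on the same underlying mechanism as the paper's proof --- the decomposition induced by the orthogonal idempotents --- but the execution is genuinely different. The paper does not argue via projections at all: it invokes a known general result for webs over reducible algebras (for $\fA_2=\cI\oplus\cI^*$ with ideals of rank $m$ and $r-m$, the real realization of a web over $\fA_2$ admits a double fibration into $(nm)$-dimensional and $n(r-m)$-dimensional subwebs, which are realizations of webs over the ideals), and then merely records the explicit component formula $\lambda_\alpha=(\lambda_\alpha^{\fM}\be_+,\lambda_\alpha^{\fM}\be_-)$ --- which is exactly the pointwise Peirce projection you construct by hand. Your route buys a self-contained argument (Peirce decomposition of $\fA$, inherited splitting $\fM=\be_+\fM\oplus\be_-\fM$, projection of the generalized Cauchy--Riemann-compatible defining equations onto each ideal) where the paper offers a citation; the paper's route buys generality, since the double-fibration theorem covers arbitrary rank $r$ and ideal rank $m$, not just the split rank-two case needed here.

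One caveat: your closing claim that general position of the projected foliations ``follows from multiplication by an idempotent being a ring homomorphism'' is too quick. The homomorphism property gives you $\cI$-compatible defining equations, but it does not by itself rule out degeneration: if a partial derivative of $f$ were a nonzero element of $\cI$ (hence a zero divisor annihilated by $\be_-$), the projected foliation on $M_-$ would collapse. What actually saves the argument is the hypothesis built into the definition of a web over $\fA_r$ earlier in the section --- the partial derivatives of $f$ must be non-zero \emph{in the algebra}, i.e.\ non-zero divisors, so that they have nonvanishing components in both ideals --- and it is this condition, not the homomorphism property, that guarantees transversality of each projected web. The paper sidesteps the issue entirely by citing the double-fibration theorem; in your direct proof you should invoke the non-zero-divisor hypothesis explicitly at this step.
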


 \begin{proof}
 Let $\fA=\cI\oplus \cI^*$, where the ideals $\cI$ and $\cI^*$ are of dimension $m=1$.
The algebra is of rank $r=2$ and the module $\fM$ is of dimension $n$. Then, by applying the previous statements, we have the existence of a web $W$ on $\frak{M}(\fA)$. 
The fibrations $\lambda_\alpha$ of $W$ are $(2n)$-dimensional. As a general result, it is known that for a reducible algebra $ \fA_2=\cI\oplus \cI^*$, where the ideals of respectively of rank $m$ and $r-m$ there exists a double fibration of the (real realization of a) web $W$ into $(nm)$-dimensional and $n(r - m)$-dimensional subwebs. These subwebs are real realizations of webs over the algebras $\cI$ and $\cI^*$.

Now, applying this result to our context, the (sub)webs lie on the (sub)manifolds given by the realisations of the module $\fM(\cI)$ (resp. $\fM(\cI^*)$) over the ideal $\cI$ (resp. $\cI^*$).  Explicitly, one can check this statement by hand with the following formula:
 \[\lambda_\alpha=(\lambda_\alpha^{\frak{M}}\be_+, \lambda_\alpha^{\frak{M}}\be_-)\] where
 $\be_+$ (resp. $\be_-$ )  generates the ideal $\cI$ (resp. $\cI^*$) and $\alpha\in \{1,2,\cdots, d\}.$
 
Since $\cI$ and $\cI^*$ are one dimensional, we can say that the $2n$-dimensional web $W$ admits a pair of $n$-dimensional subwebs, lying on the submanifolds corresponding to the realisations of modules over the ideals $\cI$ (resp. $\cI^*$).
\end{proof}

\begin{prop}
Let $\fA$ be a rank two split algebra, given by a pair of orthogonal idempotents $\be_+$ and $\be_-$ generating respectively the ideals $\cI$ and $\cI^*$. 
 Then, the statistical Frobenius manifold, corresponding to a module over $\fA$, is endowed with a pair of subwebs being parallelizable. 
\end{prop}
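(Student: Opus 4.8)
The plan is to reduce the assertion to the parallelizability already established for the ambient web and then transport it to each factor of the ideal decomposition. First I would recall from the preceding proposition that the split decomposition $\fA=\cI\oplus\cI^*$ into the one-dimensional ideals generated by $\be_+$ and $\be_-$ produces a double fibration of the web $W$ on $\frak{M}(\fA)$ into a pair of subwebs, each living on the submanifold that realises the module over the corresponding ideal, namely $\fM(\cI)$ and $\fM(\cI^*)$. Because $\cI$ and $\cI^*$ are of rank one, each is isomorphic to $\R$ as a real algebra, so each subweb is an ordinary real web rather than a web over a higher-rank algebra.

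Next, since the manifold carries a Frobenius structure, its Chern connection is flat and torsionless; this is exactly the input used in Proposition \ref{T:para}. The key observation is that the splitting $\fA=\cI\oplus\cI^*$ is a decomposition into ideals and is therefore respected by the induced affine structure: the flat torsionless Chern connection of $W$ restricts to a flat torsionless connection on each of the submanifolds $\fM(\cI)$ and $\fM(\cI^*)$. Consequently each subweb inherits a flat, torsionless Chern connection on its own support.

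With torsionlessness in hand, I would conclude by invoking Theorem \ref{T:Ash}: a web $W(n+1,n,r)$ with $n>2$ is parallelizable precisely when it is torsionless, so each of the two subwebs is parallelizable. Equivalently, one may argue from the top down: by Proposition \ref{T:para} the ambient web $W$ is itself parallelizable, and by the ``moreover'' clause of Theorem \ref{T:Ash} every subweb of a parallelizable $(n+1)$-web is again parallelizable; applying this to the two subwebs cut out by $\cI$ and $\cI^*$ yields the claim.

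The main obstacle I anticipate is the compatibility step in the second paragraph, namely verifying that the algebraic direct-sum splitting into ideals genuinely coincides with a web-theoretic subweb in the sense of Theorem \ref{T:Ash}, so that restriction really does preserve the vanishing of torsion. This requires checking that the structure equations of the three-web over $\fA$ — which, as recalled earlier, take the same form as those of a three-web in the real plane — decompose along the idempotent splitting $\epsilon=\be_++\be_-$, and that Lemma \ref{T:iso} (isoclinic-geodesy of the real realisation) forces the skew-symmetric part of the torsion of each factor to vanish. Once this decomposition of the torsion tensor along $\be_+$ and $\be_-$ is confirmed, the parallelizability of each subweb follows immediately from the torsionless criterion.
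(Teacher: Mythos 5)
Your proposal is correct and, in its ``top-down'' variant, is exactly the paper's own argument: the Frobenius condition makes the Chern connection flat and torsionless, so the ambient web is parallelizable, and the subweb clause of Theorem~\ref{T:Ash} transfers parallelizability to the two subwebs cut out by $\cI$ and $\cI^*$. The compatibility check you flag as a potential obstacle is not carried out in the paper either; it is treated as settled by the preceding proposition establishing the double fibration of $W$ into subwebs over the ideals.
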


\begin{proof}
A web $W$ is parallelizable if and only if its torsion and curvature tensors are equal to zero. For Frobenius manifolds the Chern connection is flat and the torsion is null. So, $W$ is parallelIizable. In virtue of Proposition~\ref{T:Ash} its subwebs are also parallelizable. 
\end{proof}

\smallskip

\begin{thm}
Let us consider the  statistical Frobenius manifold from the point of view of a module $\fM$ over a split algebra of rank 2. Suppose that $\fM$ is of dimension 2. Then, the three web $W(3,2,2)$ is an isoclinic Moufang web. The three-web is also an isoclinic group web.
\end{thm}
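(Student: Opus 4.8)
The plan is to deduce the statement from two ingredients already in hand: the differential-geometric consequence of the Frobenius condition on the Chern connection, and Lemma~\ref{T:iso} governing webs that arise as real realizations over a commutative, associative, unital algebra. First I would fix the setting: $\fM$ is a two-dimensional module over the split rank-two algebra $\fA_2=\langle\be_+,\be_-\rangle$ of orthogonal idempotents, so $\fA_2\cong\R\oplus\R$, and the real realization of $\fM$ is a four-dimensional manifold carrying the three-web $W(3,2,2)$, whose three foliations $x=a$, $y=b$, $f(x,y)=c$ have codimension $r=2$ and whose coordinate quasigroup is governed by the operation of $\fA_2$.

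For the \emph{isoclinic} part I would invoke Lemma~\ref{T:iso}: since $\fA_2$ is commutative, associative and unital, the real realization of the two-dimensional three-web over $\fA_2$ is isoclinicly geodesic, hence in particular isoclinic. For the \emph{group} and \emph{Moufang} parts I would use that, on a statistical Frobenius manifold, the Chern connection is flat and torsion-free, exactly as in the preceding propositions; hence both structure tensors of $W(3,2,2)$—its curvature and its torsion—vanish and the web is parallelizable. A parallelizable three-web has abelian-group coordinate loops, and an abelian group is in particular a group and, a fortiori, a Moufang loop; the associativity and commutativity of $\fA_2$ reinforce this, as the coordinate quasigroup inherits these properties directly from the algebra operation defining the third foliation. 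Combining the two conclusions, $W$ is simultaneously isoclinic and a group web, that is an isoclinic group web, and simultaneously isoclinic and Moufang, that is an isoclinic Moufang web.

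The step I expect to be the main obstacle is the careful bookkeeping forced by the multidimensional, codimension-two nature of $W(3,2,2)$. For $r=2$ the isoclinic property is a genuinely independent condition, not the automatic one of the planar case, so I must derive it from Lemma~\ref{T:iso}—equivalently, from the vanishing of the skew-symmetric part of the torsion, which is precisely the defining condition for isoclinic geodesy—rather than from parallelizability alone. The accompanying subtlety is to check that the web-to-loop correspondence, whereby flatness and torsion-freeness of the Chern connection yield the group and Moufang closure configurations, remains valid in this algebra-valued setting, i.e. that the Reidemeister and Moufang closure conditions are compatible with the realization over $\fA_2$.
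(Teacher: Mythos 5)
Your proposal is correct and rests on the same two pillars as the paper's proof---the Frobenius condition forcing the Chern connection to be flat and torsion-free, and Lemma~\ref{T:iso} supplying isoclinic geodesy for realizations over the commutative, associative, unital algebra $\fA_2$---but it closes the argument by a genuinely shorter route. The paper first gets the group property from the vanishing of the curvature tensor (Theorem 1.11 of~\cite{AkSh92}), then detours through hexagonality, transversal geodesicity (Theorem 3.5 of~\cite{AkSh92}), Grassmannizability (Theorem 2.6.4 of~\cite{Go88}), and finally invokes the three-way equivalence of Theorem 4.12 of~\cite{AkSh92} (isoclinic Moufang $\Leftrightarrow$ isoclinic group $\Leftrightarrow$ Grassmannizable with hyperplane generators) to land on the Moufang conclusion. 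You instead note that flatness plus torsion-freeness makes the web parallelizable with abelian-group coordinate loops, so it is a group web outright, and since every group is a Moufang loop the Moufang property is immediate once isoclinicity is in hand; this bypasses Grassmannizability entirely, which is an advantage, because the paper's appeal to Theorem 4.12 via condition (3) would strictly require checking that the three generating hypersurfaces are hyperplanes, a verification the paper does not carry out (its conclusion really follows from condition (2), group plus isoclinic, exactly as in your argument). What the paper's longer route buys is the explicit Grassmannizability of the web, which connects to the algebraization theme of the following subsection. One shared point both treatments leave implicit, and which you rightly flag as the delicate step: Lemma~\ref{T:iso} yields \emph{isoclinicly geodesic} (vanishing skew-symmetric part of the torsion, Def.~1.11.9 of~\cite{Go88}), and passing to \emph{isoclinic} requires observing that the isoclinicity condition on the skew part of the torsion is then satisfied with zero covector; your ``hence in particular isoclinic'' should be justified by exactly this remark. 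Finally, your use of torsion vanishing (beyond the curvature vanishing the paper's Theorem 1.11 needs) is legitimate here, since the paper asserts throughout that the Chern connection of a statistical Frobenius manifold is both flat and torsionless.
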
   
\begin{proof}
A three-web $W$ is a group three-web if and only if its curvature tensor is equal to zero (Theorem 1.11 \cite{AkSh92}). This is satisfied because we have a Frobenius manifold. Furthermore, it is hexagonal (because of the Frobenius condition). By theorem. 3.5~\cite{AkSh92} it is thus transversally geodesic and all its 2-dimensional subwebs are hexagonal. Therefore, we have gathered the necessary conditions to apply lemma~\ref{T:iso}, and deduce that the three web is an isoclinicly geodesic three web. 

Now, theorem 2.6.4~\cite{Go88} states that a web $W(n+1,n,r)$, for $n \geq 2, r \geq 2$, is Grassmannisable if and only
if it is both transversally geodesic and isoclinic. So, we deduce that we have indeed a three-web being Grassmannisable. 

By Theorem 4.12\, \cite{AkSh92} we have the following equivalences: 
\begin{enumerate}
\item A three web $W$ is an isoclinic Moufang web.
\item A three Web $W$ is an isoclinic group web.
\item A three Web $W$ is Grassmannizable and the three hypersurfaces, generating this web in a space $\mathbb{P}^{n+1}$ are hyperplanes.
\end{enumerate}
Therefore we have that the three web is also an isoclinic Moufang web.
\end{proof}

\medskip

\subsection{Algebraizable webs} 
From a projective variety a web can be constructed as follows. Let $V^k$ be an algebraic variety of dimension $k$ and of degree $d$ in a projective space $\Pp^n$ of dimension $n$. Then, $V^k$ meets a linear space $\Pp^{n-k}$ of dimension $n-k$ in $d$ points. Consider the Grassmann manifolds $G(n-k,n)$ of all $\Pp^{n-k}$'s in $\Pp^n$; their dimension is $k(n-k+1)$. 
The $\Pp^{n-k}$'s through a point of $\Pp^n$ form a submanifold of dimension $k(n-k)$ (i.e. codimension $k$), which contain $\Pp^{n-k}$. 
This shows that $V^k$ defines in $G(n-k,n)$ (or at least in a neigbourhood of it) a $d$-web of codimension $k$. The web defined by the above construction is {\it algebraic}. We refer to this construction as the Chern--Griffiths web construction~\cite{ChGr78}.   

\begin{thm}\label{T:alg}
Consider the statistical pre-Frobenius manifold. Then, there exists an algebraizable Chern--Griffiths web $W(n+1,n,r)$.
\end{thm}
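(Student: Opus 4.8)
The plan is to combine the main theorem of Section~\ref{S:3} with the Chern--Griffiths construction recalled immediately above. That theorem identifies the statistical pre-Frobenius manifold with an algebraic variety over $\Q$ --- a toric variety $\cX$ when the sample space $\Omega$ is discrete. The strategy is therefore to use $\cX$ itself as the input variety $V^k$ of the Chern--Griffiths construction: since $\cX$ is genuinely algebraic, the web it produces by the incidence construction is an algebraic web, and an algebraic web is \emph{a fortiori} algebraizable. In this sense the bulk of the work has already been done by the identification theorem, and the present statement is its web-theoretic shadow.

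Concretely, I would first fix the projective embedding of $\cX$ supplied by the monomial parametrization $\boldsymbol{\tau_j}=\prod_{i}t_i^{q_{ij}}$ from the proof of that theorem, recording its dimension $k$ and its degree $d$ in the ambient $\Pp^N$. I would then run the incidence construction verbatim: a generic linear subspace $\Pp^{N-k}$ meets $\cX$ in $d$ points, and letting the $\Pp^{N-k}$'s vary through a fixed point of $\Pp^N$ yields $d$ foliations of codimension $k$ on the Grassmannian $G(N-k,N)$, that is, a $d$-web of codimension $k$. Because the leaves are the fibres of an algebraic incidence variety, this web is algebraic. To match the symbol $W(n+1,n,r)$ I would identify the number of leaves $d$ with $n+1$, the codimension $k$ with $r$, and solve the dimension identity $k(N-k+1)=nr$ for the ambient dimension $N$.

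The hard part is the arithmetic bookkeeping that forces the triple $(d,k,N)$ to yield exactly $W(n+1,n,r)$. The crucial step is to show that the degree of the toric variety $\cX$ built from the $n+1$ directional statistics $\{1,q_1(\omega),\dots,q_n(\omega)\}$ equals $n+1$, so that the web has precisely $n+1$ foliations. This should follow from the normalization $q_0(\omega_j)=1$, which places all image points on a common affine hyperplane and thereby controls the intersection multiplicity of $\cX$ with a complementary linear subspace. Once $\deg\cX=n+1$ and the codimension $r$ are fixed, the dimension identity pins down $N$, and the existence of the algebraizable Chern--Griffiths web $W(n+1,n,r)$ follows at once. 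In the non-discrete case the same incidence construction applies to the underlying algebraic variety furnished by the main theorem, losing only the toric (lattice) refinement but not the algebraicity of the resulting web.
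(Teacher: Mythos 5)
Your overall route is exactly the paper's: its proof of this theorem consists of two sentences, citing the main theorem of Section~3 (the statistical pre-Frobenius manifold is an algebraic, indeed toric, variety over $\Q$) and then invoking the Chern--Griffiths incidence construction ``above'' to produce an algebraic, hence algebraizable, web. Your first two paragraphs reproduce this argument faithfully, including the remark that the non-discrete case retains algebraicity while losing the toric (lattice) refinement.

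The genuine gap is in your third paragraph, where you attempt to pin down the parameters $(n+1,n,r)$ --- a point on which the paper is silent. The claim that $\deg\cX = n+1$ ``follows from the normalization $q_0(\omega_j)=1$'' is false in general. That normalization only places the exponent vectors on a common affine hyperplane, which is what makes the monomial parametrization projectively well defined; the degree of the resulting projective toric variety is the normalized lattice volume of the convex hull of the columns of $\cQ$ (Kushnirenko's theorem), and this bears no fixed relation to the number $n+1$ of statistics. For instance, the quadratic Veronese surface arises from the six exponent vectors of the degree-two monomials in three variables, all lying on the hyperplane $u_1+u_2+u_3=2$, yet it has degree $4$, not $6$; and rescaling the $q_i$ inflates the volume, hence the degree, without changing $n$. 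Worse, your dimension identity $k(N-k+1)=nr$ together with $r=k$ forces $N=n+k-1$, so demanding $d=n+1$ means $\deg\cX = \operatorname{codim}\cX + 2$, i.e.\ degree one more than the minimal possible --- a strong special condition that the toric variety attached to arbitrary integer-valued statistics has no reason to satisfy. So the bookkeeping step you yourself flag as ``the hard part'' would fail as proposed. To be fair, the paper's own proof never addresses this either: it asserts the type $W(n+1,n,r)$ without any matching of degree, dimension, or codimension. You have correctly located the weak point of the published argument; your proposed repair simply does not close it.
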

\begin{proof}
 We have shown that the pre-Frobenius manifold is an algebraic variety, previously. So, using the construction above we obtain an algebraizable web. 
 \end{proof}

 We have the following statement which is that:

\begin{cor}
Consider the statistical pre-Frobenius manifold and the web $W$ obtained from the construction above.
Then, the web $W$ is hexagonal and isoclinic.
\end{cor}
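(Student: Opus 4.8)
The plan is to split the statement into its two claims and prove each by combining Theorem~\ref{T:alg} with results already established above. By Theorem~\ref{T:alg} the web in question is an algebraizable Chern--Griffiths web $W = W(n+1,n,r)$ living on the statistical pre-Frobenius manifold; in the Frobenius regime I may freely use that the Chern connection is flat and that its torsion vanishes, exactly as in the earlier propositions.

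For hexagonality the cleanest route is through parallelizability. By Proposition~\ref{T:para} the web $W(n+1,n,r)$ on the statistical Frobenius manifold is parallelizable: the Frobenius condition forces the Chern connection to be flat and torsion-free, and Theorem~\ref{T:Ash} then applies. A parallelizable web is \emph{a fortiori} hexagonal, since every three-subweb of a parallel web closes up into the elementary hexagonal configuration. Hence $W$ is hexagonal. In the excluded low-dimensional range one instead invokes the direct hexagonality proposition for the three-web, whose proof rests on the same vanishing of curvature and torsion.

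For isoclinicity I would argue through Grassmannizability. An algebraizable web is, by construction, equivalent to a web cut out by a submanifold of a projective space inside a Grassmann manifold, and is therefore Grassmannizable. I would then apply Goldberg's Theorem~2.6.4~\cite{Go88}, already cited above, which states that for $n\geq 2$ and $r\geq 2$ a web $W(n+1,n,r)$ is Grassmannizable if and only if it is simultaneously transversally geodesic and isoclinic. The Grassmannizability of $W$ therefore yields in particular that $W$ is isoclinic.

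The main obstacle I expect lies in the implication ``algebraizable $\Rightarrow$ Grassmannizable'' together with verifying the dimension hypotheses $n\geq 2,\ r\geq 2$ required by Goldberg's theorem, and in checking that the local equivalence realizing algebraizability preserves the projective-differential invariants at stake, hexagonality and isoclinicity both being \emph{local} invariants of the web. Once these compatibilities are secured, the two halves combine to give that $W$ is hexagonal and isoclinic.
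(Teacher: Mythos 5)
Your proposal splits the statement into two halves, and the two halves fare differently. The isoclinicity half is essentially workable: an algebraizable web is by construction a Grassmannizable one (its generating submanifold in projective space is an algebraic variety), so Goldberg's Theorem~2.6.4 of \cite{Go88} (Grassmannizable $\Leftrightarrow$ transversally geodesic and isoclinic, for $n\geq 2$, $r\geq 2$) does yield isoclinicity, modulo the dimension hypotheses you yourself flag. But this is a detour compared with what the paper does: the paper's entire proof is a single citation of Goldberg's Theorem~2.6.9 of \cite{Go88}, which states that a web $W(n+1,n,r)$ is algebraizable \emph{if and only if} it is isoclinic \emph{and} hexagonal. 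Combined with Theorem~\ref{T:alg}, both properties drop out simultaneously, with no need for Grassmannizability, Theorem~2.6.4, or any curvature computation.

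The hexagonality half of your argument has a genuine gap. The corollary is stated for the statistical \emph{pre}-Frobenius manifold, but you derive hexagonality from Proposition~\ref{T:para} (parallelizability), whose proof uses flatness and torsion-freeness of the Chern connection --- properties the paper obtains only under the \emph{Frobenius} condition. You even announce that ``in the Frobenius regime I may freely use that the Chern connection is flat,'' yet a pre-Frobenius manifold is precisely one where flatness of the structure connection is \emph{not} assumed (by Theorem~\ref{T:1}, flatness is exactly what upgrades pre-Frobenius to Frobenius). So your hexagonality argument proves the corollary only in the smaller Frobenius class, not in the stated generality. The fix is the paper's own route: hexagonality is not an extra geometric input here but a formal consequence of algebraizability via the ``only if'' direction of Theorem~2.6.9, which applies on the pre-Frobenius manifold because Theorem~\ref{T:alg} already holds there.
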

\begin{proof}
By theorem. 2.6.9 \cite{Go88}: a web $W(n+1,n,r)$ is algebraizable if and only if it is isoclinic and hexagonal.
Applying the theorem~\ref{T:alg} the conclusion is direct.
\end{proof}

Statistical data is impacted by this web result. 
We can state as a corollary:
\begin{cor} 
Suppose $\Omega$ is a finite discrete sample space. 
Then, the statistical data given by $q_i(\omega_j)$, where $\omega_j\in \Omega$ lie on the (hyper)webs.
\end{cor}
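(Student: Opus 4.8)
The plan is to trace the statistical data through the monomial parametrization used to realise the manifold as a toric variety, and then to invoke the web already constructed on that variety. First I would recall from the main theorem that, when $\Omega=\{\omega_1,\dots,\omega_m\}$ is finite and discrete, the exponential statistical pre-Frobenius manifold is identified with the affine toric variety $V(\cI_T)$ cut out by the toric ideal $\cI_T$ of the integer matrix $\cQ=[q_{ij}]$, with $q_{ij}=q_i(\omega_j)$. The point carried over from that proof is that each column $\mathbf{q}_i=(q_i(\omega_1),\dots,q_i(\omega_m))^T$ of $\cQ$ is exactly the exponent vector of the monomial $t^{\mathbf{q}_i}$ under the homomorphism $\hat{\pi}\colon y_i\mapsto t^{\mathbf{q}_i}$, so that the coordinate functions on the variety, and the probability $p(q;t)=t_1^{\mathbf{q}_1}\cdots t_n^{\mathbf{q}_n}$, are governed directly by the statistical data.

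Next I would invoke Theorem~\ref{T:alg} together with the preceding corollary: since the manifold is an algebraic (toric) variety, the Chern--Griffiths construction furnishes an algebraizable web $W(n+1,n,r)$ on it, and this web is hexagonal and isoclinic. The foliations of this web are level sets of the coordinate functions of the variety, which by the previous step are Laurent monomials whose exponents are the columns $\mathbf{q}_i$. Thus each data vector $\mathbf{q}_i=(q_i(\omega_j))_j$ determines a foliation of the web, and every point of the manifold---being a probability distribution parametrised by the statistics---lies on a uniquely determined leaf of each foliation. This is precisely the assertion that the statistical data $q_i(\omega_j)$ lie on the (hyper)webs.

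The main obstacle will be to make the phrase \emph{lie on the webs} precise and to verify that the leaves produced by the Chern--Griffiths construction are indexed by the exponent data $\cQ$ rather than by some auxiliary choice. Concretely, one must check that passing from the monomial parametrization $\hat{\pi}$ to the Grassmannian picture of the web preserves the correspondence between foliations and the columns of $\cQ$; this amounts to observing that the regularity condition $\mathrm{Supp}(\tilde{f})\subset\sigma_\alpha$ recalled in Section~\ref{S:2.3}, which controls the chart structure, is phrased in terms of the same lattice points $\mathbf{q}_i$ that index the statistics. Once this bookkeeping is in place the conclusion is immediate, since the web is by construction a foliation of the entire variety and the statistics exhaust its defining exponent data.
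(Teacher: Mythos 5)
There is a genuine gap, and it sits exactly at the point you yourself flag as the ``main obstacle.'' Your key step asserts that the foliations of the Chern--Griffiths web are level sets of the coordinate functions of the variety, i.e.\ of the Laurent monomials $t^{\mathbf{q}_i}$. This misreads the construction recalled in the paper: in the Chern--Griffiths picture the web does not live on the variety $V^k$ at all, but on (a neighbourhood in) the Grassmannian $G(n-k,n)$ of linear subspaces $\Pp^{n-k}\subset\Pp^n$, and its $d$ foliations are indexed by the $d$ intersection points of a moving $\Pp^{n-k}$ with $V^k$ --- each leaf is the codimension-$k$ family of linear spaces through a fixed point of the variety. So the leaves are \emph{not} coordinate level sets on the variety, and they are not indexed by the columns $\mathbf{q}_i$ of $\cQ$ in the way you claim; the ``bookkeeping'' you propose (tracking the chart-regularity condition $\mathrm{Supp}(\tilde f)\subset\sigma_\alpha$) is about the toric chart structure and does not produce the identification of foliations with data vectors that your argument needs. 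As written, the conclusion ``each data vector determines a foliation'' would fail.

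What the paper actually uses, and what is missing from your proposal, is the linear-algebraic fact extracted from the kernel of the parametrization: the system $\cQ\bu=0$ attached to every point of the exponential statistical manifold --- a system of linear equations whose coefficient matrix consists precisely of the statistical data $q_i(\omega_j)$ --- cuts out a subset of a (hyper)plane. Since by the definition of an algebraic web (Theorem~\ref{T:alg} plus the Chern--Griffiths construction) the web's structure is carried by exactly such linear sections, the data enter as the hyperplanes underlying the web, which is the precise sense in which they ``lie on the (hyper)webs.'' Your opening moves (finite $\Omega$, the matrix $\cQ$, the homomorphism $\hat{\pi}$, Theorem~\ref{T:alg} and its corollary) agree with the paper's setup, but without the hyperplane statement about $\ker\pi$ your argument has no bridge from the exponent data to the web's leaves, and the substitute bridge you offer is incorrect.
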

\begin{proof}
This follows from the definition of an algebraic web and from the fact that the system of equations, attached to every point on the exponential statistical manifold $\cQ\bu$=0 forms a subset of a (hyper)plane. 
\end{proof}

\end{document}